\newcommand{\Hmm}[1]{\leavevmode{\marginpar{\tiny%
$\hbox to 0mm{\hspace*{-0.5mm}$\leftarrow$\hss}%
\vcenter{\vrule depth 0.1mm height 0.1mm width \the\marginparwidth}%
\hbox to 0mm{\hss$\rightarrow$\hspace*{-0.5mm}}$\\\relax\raggedright #1}}}
\newcommand{\R}{{\mathbb{R}}}
\newcommand{\C}{{\mathbb{C}}}
\newcommand{\Z}{{\mathbb{Z}}}
\newcommand{\f}{\frac}
\newcommand{\beq}{\begin{equation}}
\newcommand{\eeq}{\end{equation}}
\newcommand{\bdm}{\begin{displaymath}}
\newcommand{\edm}{\end{displaymath}}
\newcommand{\ba}{\begin{align}}
\newcommand{\ea}{\end{align}}
\newcommand{\bpf}{\begin{proof}}
\newcommand{\epf}{\end{proof}}
\newcommand{\la}{\langle}
\newcommand{\ra}{\rangle}
\newcommand{\vphi}{\varphi}
\newcommand{\dav}{{d_{\mathrm{av}}}}
\newcommand{\calC}{\mathcal{C}}
\newcommand{\calF}{\mathcal{F}}
\newtheorem{theorem}{Theorem}
\newtheorem{proposition}[theorem]{Proposition}
\newtheorem{lemma}[theorem]{Lemma}
\theoremstyle{definition}
\newtheorem{remark}[theorem]{Remark}
\newtheorem{remarks}[theorem]{Remarks}
\newcounter{theoremi}[theorem]
\numberwithin{theorem}{section}
\numberwithin{equation}{section}
\newcounter{assumptions}
\newcounter{smalllist}
\newcounter{listi}
\newenvironment{theoremlist}{\begin{list}{{\rm(\roman{listi})}}{%
\setlength{\topsep}{0mm}\setlength{\parsep}{0mm}\setlength{\itemsep}{0mm}%
\setlength{\labelwidth}{1.5em}\setlength{\leftmargin}{1.7em}\usecounter{listi}%
}}{\end{list}}
\newcounter{smallenum}
\begin{document}

\title[Continuum Limit]{Continuum limit related to dispersion managed nonlinear Schr\"{o}dinger equations}
\author[M.--R. Choi, Y.--R. Lee]{Mi--Ran Choi$^\dag$ and Young--Ran Lee$^\ddag$}

\address{$^\dag$ Research Institute for Basic Science, Sogang University, 35 Baekbeom--ro (Sinsu--dong),
    Mapo-gu, Seoul 04107, South Korea.}%
\email{rani9030@sogang.ac.kr}

\address{$^\ddag$ Department of Mathematics, Sogang University, 35 Baekbeom--ro (Sinsu--dong),
    Mapo--gu, Seoul 04107, South Korea.}%
\email{younglee@sogang.ac.kr}

\keywords{nonlocal NLS, continuum limit, dispersion management}
\subjclass[2020]{35Q55, 37K60, 35Q60}

\date{\today}

\begin{abstract}
 We consider the dispersion managed nonlinear Schr\"odinger equation with power-law nonlinearity and its discrete version of equations with step size $h\in(0,1]$. We prove that the solutions of the discrete equations strongly converge in $L^2(\R)$ to the solution of the dispersion managed NLS as $h\to 0$ after showing the global well-posedness of the discrete equations.
\end{abstract}

\maketitle

\section{Introduction }\label{introduction}

In this paper, we consider the dispersion managed nonlinear Schr\"odinger equation (NLS) with power-law nonlinearity
\beq \label{eq:NLS}
i\partial_t u + \dav \partial_x^2 u +\int_0^1T_r^{-1} (|T_r u|^{p-1} T_r u) dr=0\,,
\eeq
where $u=u(x,t):\R \times \R \to \C$, $\dav\in \R$, and $p>1$.
Here, $T_r=e^{ir\partial_x^2}$ is the solution operator of the Schr\"odinger equation, that is, $w(\cdot, r)=T_r\vphi$ is the solution of
 \[
 i\partial_r w + \partial_x^2 w =0,\quad w(\cdot,0)=\vphi.
 \]
This equation, the so-called Gabitov-Turitsyn equation, first appears in \cite{Gabitov:96, Gabitov:96b} as an averaged equation for NLS with a varying coefficient of $t$, see, e.g., \cite{CL22, ZGJT01} for verification of the averaging process. Such NLS models propagation of pulses in fiber-optics communication systems where the local dispersion varies periodically with alternating signs along the cable, see, e.g., \cite{SS} . Here, $x$  corresponds to the (retarded) time, $t$ the distance along the cable, and $\dav$ the average of the dispersion along the cable.
The technique of strong dispersion management, via rapidly and strongly varying dispersion, was invented to balance the effects of dispersion and nonlinearity. This technique generates stable soliton-like pulses (breather type solutions) and was successful in transferring data at ultra-high speeds over intercontinental distances, see, e.g., \cite{Ablowitz:98,  Gabitov:96, Gabitov:96b}.
The dispersion managed NLS is relatively well-understood in various contexts, for example, the existence of ground states in \cite{CHLT, HL,  MR2033144, ZGJT01}, some properties of ground states in \cite{MR2770579, MR2472020, Stanislavova}, and orbital stability of the set of ground states as well as global well-posedness in \cite{CHLW,HKS2015}.

\bigskip

As a discretization of equation \eqref{eq:NLS}, for each $h>0$, we consider 
\beq \label{eq:dNLS}
i\partial_t u_h + \dav \Delta_hu_h +\int_0^1T_{h,r}^{-1} (|T_{h,r} u_h|^{p-1} T_{h,r} u_h) dr=0,
\eeq
where $u_h=u_h(x,t):h\Z\times \R \to \C$, $h\Z=\{x=hm: m\in \Z\}$, and the discrete Laplacian defined by
\[
(\Delta_h f)(x) =\f{f(x+h)+ f(x-h)-2f(x)}{h}
\]
for all $x\in h\Z$. Here, $T_{h,r}= e^{ir\Delta_h}$ is the solution operator for the discrete Schr\"odinger equation, that is, $w(\cdot, r)=T_{h,r}\vphi$ is the solution of
 \begin{equation}\label{eq:discrete_Schrodinger}
 i\partial_r w + \Delta_h w =0,\quad w(\cdot,0)=\vphi.
 \end{equation}
In nonlinear optics, equation \eqref{eq:dNLS} is related to the diffraction managed discrete NLS that is a model for an array of coupled nonlinear waveguides, where the waveguides' diffraction is periodically and strongly varied, see, e.g., \cite{PhysRevLett.87.254102,PhysRevE.65.056618,ABLOWITZ2003276}. Here, $x$ corresponds to the location of the waveguides, $t$ the distance along the waveguides, and $\dav$ the average of the diffraction along the waveguides.
This equation with $h=1$ had first been rigorously studied in \cite{Moeser05, Panayotaros}. The existence of ground states and orbital stability of the set of ground states were proved in \cite{Moeser05, Panayotaros} for positive average diffraction and \cite{Stanislavova2} for zero average diffraction.
See \cite{CHL2017, HL, Stanislavova2} for the decay and smoothness of ground states.
In another view point, the discrete NLS \eqref{eq:dNLS} can naturally be considered as a numerical approximation of the dispersion managed NLS \eqref{eq:NLS}.

Our main interest is how to connect the solutions of \eqref{eq:NLS} and \eqref{eq:dNLS}. We first show the well-posedness for the Cauchy problem of \eqref{eq:dNLS} which is proved  in \cite{Moeser05} only when $p=3$.
\begin{theorem}[Global Well-Posedness] \label{thm:globalwellposedness}
Let $h\in (0, 1]$ and $p>1$. For the initial datum $\vphi_h\in L^2_h(h\Z)$, there exists a unique solution $u_h\in \calC(\R,L^2_h(h\Z) )$ of \eqref{eq:dNLS}.
Moreover, $u_h$ continuously depends on the initial data and it conserves the mass and the energy, that is,
\beq
\|u_h(t)\|^2_{L^2_h}=\|\vphi_h\|^2_{L^2_h} \notag \quad\mbox{and}\quad
 E(u_h(t))= E(\vphi_h)
 \eeq
 for all $t\in\R$, where the energy is given by
 \beq \label{energy}
 E(f):=\f \dav 2 \| D_h^+ f\|_{L^2_h}^2 - \f{1}{p+1}\int_0 ^1 \|T_{h,r} f\|_{L^{p+1}_h}^{p+1} dr\notag
 \eeq
 for $f\in L^2_h(h\Z)$.
\end{theorem}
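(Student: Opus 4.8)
The plan is to regard \eqref{eq:dNLS} as an ordinary differential equation in the Banach space $L^2_h(h\Z)$, exploiting two special features of the lattice that are absent in the continuum. First, the discrete Laplacian $\Delta_h$ is a \emph{bounded} self-adjoint operator on $L^2_h$, with $\|\Delta_h\|\le 4/h$ (since lattice shifts are isometries), so $T_{h,r}=e^{ir\Delta_h}$ is a norm-continuous unitary group. Second, and crucially, on the lattice one has the embedding $L^2_h\hookrightarrow L^\infty_h$ with $\|f\|_{L^\infty_h}\le h^{-1/2}\|f\|_{L^2_h}$ (and more generally $L^2_h\hookrightarrow L^q_h$ for $q\ge 2$), because $\ell^2\subset\ell^q$ for $q\ge 2$. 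These two facts bypass the difficulties (Strichartz estimates, failure of the power nonlinearity to be Lipschitz on $L^2$) that make the continuum theory delicate, and reduce the whole statement to Banach-space ODE theory together with two conservation-law computations.

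The first step is to show that $\calN(f):=\int_0^1 T_{h,r}^{-1}(|T_{h,r}f|^{p-1}T_{h,r}f)\,dr$ is locally Lipschitz from $L^2_h$ to itself. Using the elementary pointwise bound $\big||z|^{p-1}z-|w|^{p-1}w\big|\le C_p(|z|^{p-1}+|w|^{p-1})|z-w|$, the embedding $L^2_h\hookrightarrow L^\infty_h$, and the unitarity $\|T_{h,r}f\|_{L^2_h}=\|f\|_{L^2_h}$, one finds for $f,g$ in the ball of radius $R$ that each integrand is bounded by $C(p)\,(h^{-1/2}R)^{p-1}\|f-g\|_{L^2_h}$, and integrating over $r\in[0,1]$ gives the claim. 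Since $\Delta_h$ is bounded (hence globally Lipschitz), the full vector field $F(u):=i\dav\Delta_h u+i\calN(u)$ is locally Lipschitz on $L^2_h$. The Picard--Lindel\"of theorem in Banach spaces then produces, for every $\vphi_h\in L^2_h$, a unique solution $u_h$ that is $C^1$ in $t$ with values in $L^2_h$ on a maximal interval about $0$, together with continuous (indeed locally Lipschitz) dependence on $\vphi_h$ via Gronwall's inequality; the local existence time depends only on $\|\vphi_h\|_{L^2_h}$ (and on the fixed $h$).

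Next I would establish the conservation laws, which also supply the a priori bound needed for globality; here the $C^1$ regularity justifies differentiation. For the mass, differentiating $\|u_h(t)\|_{L^2_h}^2$ and inserting $\partial_t u_h=F(u_h)$, the term $i\dav\la\Delta_h u_h,u_h\ra$ is purely imaginary because $\Delta_h$ is self-adjoint, while by unitarity $\la\calN(u_h),u_h\ra=\int_0^1\|T_{h,r}u_h\|_{L^{p+1}_h}^{p+1}\,dr$ is real, so $i\la\calN(u_h),u_h\ra$ is purely imaginary as well; hence $\Real\la\partial_t u_h,u_h\ra=0$ and the $L^2_h$ norm is constant. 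For the energy, I would write $\tfrac\dav2\|D_h^+u_h\|_{L^2_h}^2=-\tfrac\dav2\la\Delta_h u_h,u_h\ra$ using the summation-by-parts identity $(D_h^+)^\ast D_h^+=-\Delta_h$, differentiate in $t$, and use unitarity together with the commutation $\partial_t(T_{h,r}u_h)=T_{h,r}\partial_t u_h$ to rewrite $\tfrac{d}{dt}E(u_h)$ as $-\Real\la w,\partial_t u_h\ra$ with $w:=\dav\Delta_h u_h+\calN(u_h)$. Substituting $\partial_t u_h=iw$ gives $-\Real\la w,iw\ra=-\Real(-i\|w\|_{L^2_h}^2)=0$, so $E(u_h)$ is conserved.

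Finally, mass conservation yields $\|u_h(t)\|_{L^2_h}=\|\vphi_h\|_{L^2_h}$ throughout the maximal interval; since the local existence time depends only on this conserved quantity, the standard continuation argument rules out finite-time blow-up and extends $u_h$ to all of $\R$, giving $u_h\in\calC(\R,L^2_h)$ and completing the proof. I expect no genuine analytic obstacle: the hard inputs of the continuum problem disappear once one observes that $\Delta_h$ is bounded and that $L^2_h\hookrightarrow L^\infty_h$, so the only real care is algebraic bookkeeping in the conservation-law step, namely keeping the self-adjointness of $\Delta_h$, the unitarity of $T_{h,r}$, and the summation-by-parts identity correctly aligned.
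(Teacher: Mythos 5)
Your proposal is correct and follows essentially the same route as the paper: local well-posedness by Picard iteration using the $h$-dependent embedding $\|f\|_{L^\infty_h}\le h^{-1/2}\|f\|_{L^2_h}$ to make the nonlinearity locally Lipschitz on $L^2_h$, conservation of mass and energy via the standard multiplier/self-adjointness computations, and global extension from mass conservation. The only (harmless) difference is that you treat \eqref{eq:dNLS} directly as a Banach-space ODE via Picard--Lindel\"of rather than running the contraction on the Duhamel formula as the paper does; since $\Delta_h$ is bounded the two are equivalent, and your version has the minor advantage of making the $C^1$-in-$t$ regularity used in the conservation-law differentiation explicit.
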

Here, the norm $\| \cdot \|_{L_h ^p}$, $1\leq p < \infty$, is given by
\[
\|f\|_{L_h^p} :=h ^{\f{1}{p}}\|f\|_{l^p(h\Z)}=
\left\{h \sum_{x\in h\Z} |f(x)|^p \right\}^{1/p}
\]
and
$D_h^+$ denotes the forward difference operator
\beq\label{diff}
(D_h^+f)(x):= \f{f(x+h)-f(x)}{h}
\eeq
for $x \in h\Z$.

\bigskip

Now we consider the continuum limit of discrete version \eqref{eq:dNLS} of dispersion managed NLS \eqref{eq:NLS} as $h\to 0$. It is naturally expected that the solutions of  \eqref{eq:dNLS} converge to the solution of \eqref{eq:NLS} in some sense.
For a more precise statement, we give some notations.
Given a complex-valued function $f$ in $L^1_{loc}(\R)$, define its discretization $f_h:h\Z \to \C$ by
\beq
f_h(x):=\f{1}{h} \int_{x}^{x+h} f(x')dx'\notag
\eeq
for all $ x\in h\Z$.
Conversely, we define the linear interpolation operator $p_h$ mapping a function $f:h\Z \to\C$ to a function on $\R$ by
\beq
(p_h f)(x):=f(x_m) + \f{f(x_m + h)-f(x_m)}{h}(x-x_m)\notag
\eeq
for all $x\in [x_m,  x_m+ h)$, $x_m=hm\in h\Z$.

The continuum limit for discrete NLS was first studied in \cite{KLS2013}. They proved that solutions of one-dimensional cubic discrete NLS with long-range lattice interactions weakly converge to the solution of the corresponding (fractional) NLS as $h\to0$. In \cite{HY2019}, the authors improved the result in \cite{KLS2013} showing that solutions of discrete fractional NLS with power-law nonlinearities  strongly converge in $L^2(\R^d), d=1,2,3,$ to the solution of the corresponding continuum equation. Moreover, they gave a precise rate of strong convergence. Recently, they managed to extend the result to the discrete NLS on a periodic cubic lattice, see \cite{HKNY}. The continuum limit for discrete NLS with memory effect was shown in \cite{Grande} adapting the method in \cite{HY2019}.

We prove that solutions of discrete version \eqref{eq:dNLS} of dispersion managed NLS \eqref{eq:NLS} strongly converge in $L^2(\R)$ to the solution of \eqref{eq:NLS} as $h\to0$.
\begin{theorem}[Continuum Limit] \label{thm:continuum limit}
Let $h\in (0,1]$ and $p$ satisfy
\beq\label{ass:p}
\begin{aligned}
 \left\{
                                 \begin{array}{ll}
                                   \displaystyle 1<p<9& \hbox{if} \;\;\dav> 0
                                   \\[0.5ex]
                                  \displaystyle  p>1 &  \hbox{if} \;\; \dav<0
                                  \\[0.5ex]
                                  \displaystyle  1<p<5 &  \hbox{if} \;\; \dav=0.
                                 \end{array}
                               \right.
\end{aligned}
\eeq
Given initial datum $\vphi\in H^1(\R)$, let $u\in \calC(\R, H^1(\R))$ be the global solution to dispersion managed NLS \eqref{eq:NLS} and $u_h$ the global solution to discrete equation \eqref{eq:dNLS} whose initial datum $\vphi_h$ is the discretization of $\vphi$, for each $h\in (0.1]$. Then
  there exist positive constants $A$ and $B$, independent of $h$, such that for all $0<T<T^*$,
\[
\sup_{t\in [-T, T]}\|p_h u_h(t)  -u(t)\| _{L^2} \leq A h^{\f  1 2} e^{BT}\,,
\]
where $T^*=\infty$ if $\dav\neq0$ and  $T^*=\f{2}{p-1}(\|\vphi\|_{L^2} \|\vphi'\|_{L^2})^{-\f{p-1}{2}}$ if $\dav=0$.
\end{theorem}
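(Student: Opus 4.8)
\emph{The plan} is to follow the Duhamel-plus-Gronwall scheme of \cite{HY2019}, comparing the discrete solution $u_h$ with the discretization $(u)_h$ of the continuum solution in $L^2_h(h\Z)$ and then transferring the estimate to $L^2(\R)$ through the interpolation operator $p_h$. Writing $S_t:=\e^{it\dav\pa_x^2}$ and $S_{h,t}:=\e^{it\dav\Delta_h}$ for the two free flows, and
\[
N(f):=\int_0^1 T_r^{-1}\bigl(\abs{T_r f}^{p-1}T_r f\bigr)\,\d r,\qquad
N_h(g):=\int_0^1 T_{h,r}^{-1}\bigl(\abs{T_{h,r} g}^{p-1}T_{h,r} g\bigr)\,\d r
\]
for the two nonlinearities, both equations are recast in mild form. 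Applying $(\dott)_h$ to Duhamel's formula for $u$ and subtracting Duhamel's formula for $u_h$, the error $e_h(t):=u_h(t)-(u(t))_h$ satisfies
\[
e_h(t)=\big[S_{h,t}\vphi_h-(S_t\vphi)_h\big]+i\int_0^t S_{h,t-s}\big[N_h(u_h(s))-N_h((u(s))_h)\big]\,\d s+i\int_0^t R_h(s)\,\d s,
\]
where the consistency remainder (depending also on the fixed $t$) is $R_h(s):=S_{h,t-s}N_h((u(s))_h)-\bigl(S_{t-s}N(u(s))\bigr)_h$.

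The argument rests on four ingredients. \emph{(i) Discretization/interpolation estimates:} the norm equivalence $\norm{p_h g}_{L^2}\simeq\norm{g}_{L^2_h}$, the boundedness of $(\dott)_h$ from $H^1(\R)$ to $H^1_h(h\Z)$ and from $L^2$ to $L^2_h$, and the approximation bound $\norm{p_h f_h-f}_{L^2}\lesssim h\norm{f}_{H^1}$. \emph{(ii) Free-propagator comparison:} the key estimate $\norm{S_{h,t}f_h-(S_t f)_h}_{L^2_h}\lesssim h^{1/2}\langle t\rangle^{1/4}\norm{f}_{H^1}$ and its analogue for $T_{h,r}$ versus $T_r$ with $r\in[0,1]$, proved on the Fourier side by comparing the symbol $\sigma_h(\xi):=-4h^{-2}\sin^2(h\xi/2)$ of $\Delta_h$ with $-\xi^2$: splitting frequencies at $\abs\xi\sim(\abs t\,h^2)^{-1/4}$ and estimating the propagator difference by $\min\!\bigl(2,\abs t\,\abs{\sigma_h(\xi)+\xi^2}\bigr)$ with one derivative of regularity produces precisely the $h^{1/2}$ rate, and this is the origin of the exponent $\tfrac12$ in the theorem. \emph{(iii) A priori bounds:} a bound for $\sup_{\abs t\le T}\norm{u_h(t)}_{H^1_h}$ uniform in $h$. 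For $\dav<0$ energy conservation gives this unconditionally, since the conserved quantity $-E(u_h(t))=\tfrac{\abs{\dav}}2\norm{D_h^+u_h(t)}_{L^2_h}^2+\tfrac1{p+1}\int_0^1\norm{T_{h,r}u_h(t)}_{L^{p+1}_h}^{p+1}\,\d r$ bounds both summands; for $\dav>0$ it follows from energy and mass conservation together with a Gagliardo--Nirenberg--Strichartz inequality for the averaged nonlinearity whose admissible range is exactly $1<p<9$; for $\dav=0$ there is no kinetic term, so one has only the local-in-time $H^1$ bound furnished by a contraction argument, valid for $1<p<5$ up to the time $T^*$. By the discrete Sobolev embedding $\norm{g}_{L^\infty}\lesssim\norm{g}_{H^1_h}$ these yield uniform $L^\infty$ control of $u_h$, while $u\in\calC(\R,H^1)$ controls $(u)_h$. \emph{(iv) Nonlinear estimates:} using that $T_{h,r}$ is unitary on $L^2_h$ and that $\norm{T_{h,r}g}_{L^\infty}\lesssim\norm{g}_{H^1_h}$, the pointwise inequality $\abs{\abs z^{p-1}z-\abs w^{p-1}w}\lesssim(\abs z^{p-1}+\abs w^{p-1})\abs{z-w}$ gives the Lipschitz bound
\[
\norm{N_h(g)-N_h(\ti g)}_{L^2_h}\lesssim\big(\norm{g}_{H^1_h}^{p-1}+\norm{\ti g}_{H^1_h}^{p-1}\big)\norm{g-\ti g}_{L^2_h},
\]
and, combined with (ii), the consistency bound $\norm{R_h(s)}_{L^2_h}\lesssim h^{1/2}\langle T\rangle^{1/4}C\bigl(\sup_{\abs t\le T}\norm{u(t)}_{H^1}\bigr)$.

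Inserting these into the error identity, the first and third terms are $O(h^{1/2})$ uniformly on $[-T,T]$ by (ii) and (iv), while the $L^2_h$ norm of the middle term is at most $C\int_0^{\abs t}\norm{e_h(s)}_{L^2_h}\,\d s$, with $C$ determined through the Lipschitz estimate of (iv) by the a priori bounds of (iii). Gronwall's inequality then gives $\sup_{\abs t\le T}\norm{e_h(t)}_{L^2_h}\lesssim h^{1/2}\e^{BT}$, and the norm equivalence together with $\norm{p_h(u)_h-u}_{L^2}\lesssim h\norm{u}_{H^1}$ from (i) upgrades this to the claimed bound on $\norm{p_h u_h(t)-u(t)}_{L^2}$.

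\emph{The hard part} is twofold. First, proving the free-propagator comparison (ii) with only $H^1$ data so as to capture the sharp $h^{1/2}$ rate requires the careful frequency decomposition indicated above rather than a crude symbol expansion, which would demand more derivatives. Second, and more delicate because the nonlinearity is nonlocal, is establishing the uniform-in-$h$ $H^1_h$ a priori bound of (iii): the dispersive smoothing of the $r$-average must be exploited through a discrete analogue of the Gagliardo--Nirenberg--Strichartz inequality, and it is precisely the admissible range of that inequality which dictates the three regimes $1<p<9$, $p>1$, and $1<p<5$ for $\dav>0$, $\dav<0$, and $\dav=0$, and, in the last case, confines the estimate to the maximal existence time $T^*$.
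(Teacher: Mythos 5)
Your architecture is essentially the paper's: Duhamel's formula, a free-propagator consistency estimate at rate $h^{1/2}$, uniform-in-$h$ $H^1_h$ a priori bounds split into the three regimes of \eqref{ass:p} exactly as in Propositions \ref{prop:uniform} and \ref{prop:uniform2}, a Lipschitz bound for the averaged nonlinearity, and Gronwall. The only structural difference is that you run the comparison on the discrete side, estimating $e_h=u_h-(u)_h$ in $L^2_h$ and transferring to $L^2(\R)$ at the very end, whereas the paper interpolates first and compares $p_hu_h$ with $u$ in $L^2(\R)$ throughout (decomposition into $I_1,\dots,I_4$). That choice is immaterial, and your frequency-splitting proof of the propagator comparison even yields a slightly better time factor ($\langle t\rangle^{1/4}$ versus the paper's $|t|$), which is harmless either way.

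The one genuine gap is in your consistency term $R_h$. You assert that $\|N_h((u(s))_h)-(N(u(s)))_h\|_{L^2_h}\lesssim h^{1/2}$ follows from the Lipschitz bound of (iv) combined with the propagator comparison of (ii), but those two ingredients control only two of the three links in the chain: comparing $T_{h,r}(u)_h$ with $(T_ru)_h$, and comparing $T_{h,r}^{-1}$ with $T_r^{-1}$ after the power is taken. The middle link --- comparing $\bigl|(T_ru)_h\bigr|^{p-1}(T_ru)_h$ with $\bigl(|T_ru|^{p-1}T_ru\bigr)_h$, i.e.\ showing that discretization approximately commutes with the pointwise map $z\mapsto|z|^{p-1}z$ --- is a separate estimate and does not follow from Lipschitz continuity of $N_h$, since both arguments there are built from the same function $u$. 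This is exactly the role played by Lemma \ref{lem:distributive} in the paper (stated there for the interpolation $p_h$ rather than for the discretization); you need its discrete-side analogue, e.g.
\[
\bigl\|(|f|^{p-1}f)_h-|f_h|^{p-1}f_h\bigr\|_{L^2_h}\lesssim h\,\|f\|_{L^\infty}^{p-1}\|f'\|_{L^2},
\]
which is provable from \eqref{ineq:ele} together with the Poincar\'e inequality on each cell $[x,x+h]$. With that lemma inserted your argument closes and reproduces the theorem; without it, the claimed bound on $R_h$ is unjustified.
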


\begin{remarks}
\begin{theoremlist}
 \item It is remarkable that Theorem \ref{thm:continuum limit} holds even for $5\leq p <9$ when $\dav>0$ in contrast to the classical focusing NLS, see \cite{HY2019}.
 \item 
The global well-posedness  in $H^1(\R)$ of \eqref{eq:NLS} for the Kerr nonlinearity, i.e., $p=3$, was first proven in \cite{ZGJT01}, see  \cite{AK} for the $H^s$ theory, $s \ge 0$. More general nonlinearities including even saturated nonlinearities were considered in \cite{CHLW}. As a special case of \cite{CHLW}, in the case of power-law nonlinearities, \eqref{eq:NLS} is globally well-posed  in $H^1(\R)$ under the condition \eqref{ass:p}. Indeed, in \cite{CHLW}, the global well-posedness in $L^2(\R)$ of \eqref{eq:NLS} when $\dav=0$ was treated, however, one can easily show the global existence of the solution in $H^1(\R)$ by the blow-up alternative.
 \item
In the case of $\dav=0$, the above result holds locally, depending on $p$ and the initial datum, in contrast to the case of $\dav\neq 0$ where the result holds for all finite $T>0$.
Such a difference is caused by Propositions \ref{prop:uniform} and \ref{prop:uniform2} which show the uniform $H^1_h$-bounds for solutions $u_h$ for $\dav\neq 0$ and $\dav=0$, respectively.
\end{theoremlist}

\end{remarks}

%

\bigskip

The paper is organized as follows:
  In Section \ref{sec:preliminary}, we introduce some notations and establish some useful estimates.
  Section \ref{sec:global well-posedness} is devoted to the global well-posedness, Theorem \ref{thm:globalwellposedness}.
  In Section \ref{sec:continuum limit}, we first prove the uniform $H_h^1$-bound for the solutions which is the key ingredient of Theorem \ref{thm:continuum limit} and finally give the proof of Theorem \ref{thm:continuum limit}.

\medskip

\section{Preliminary Estimates}\label{sec:preliminary}

We start by introducing some notations.
The Hilbert space $L^2_h(h\Z)$ is equipped with the inner product
$$
\la f, g \ra_{L^2_h}:= h\sum_{x\in h\Z} f(x) \overline{g(x)}\, .
$$
For $1\leq p \le \infty$, the space $L_h ^p(h\Z)$ is the Banach space with norm
\[
\begin{aligned}
\|f\|_{L_h^p} :=h ^{\f{1}{p}}\|f\|_{l^p(h\Z)}=  \left\{
                                 \begin{array}{ll}
                                   \displaystyle \left\{h \sum_{x\in h\Z} |f(x)|^p \right\}^{1/p} & \hbox{if} \;\; 1\leq p< \infty
                                   \\[0.5ex]
                                  \displaystyle \sup_{x\in h\Z}|f(x)| & \hbox{if} \;\; p=\infty.
                                 \end{array}
                               \right.
\end{aligned}
\]
The Fourier transform of a function $f\in L^2_h(h\Z)$ is defined by
\[
\hat{f} (\xi)= (\calF_hf)(\xi):=\f{h}{\sqrt{2\pi}} \sum_{x\in h\Z}f(x)e^{-ix\xi}
\]
for $\xi \in \f{1}{h}[-\pi,\pi]=[-\f{\pi}{h}, \f{\pi}{h}]$
and its inversion formula is given by
\[
\check{f}(x)=(\calF_h^{-1}f) (x):= \f{1}{\sqrt{2\pi}} \int_{-\f{\pi}{h}}^{\f{\pi}{h}} f(\xi) e^{ix\xi} d\xi
\]
for $x\in h\Z$.
For $f, g \in L^2_h(h\Z)$, the Parseval identity yields
\[
 \int_{-\f{\pi}{h}}^{\f{\pi}{h}} \hat{f}(\xi)\overline{\hat{g}(
\xi)} d\xi =h \sum_{x\in h\Z} f(x)\overline{g(x)}.
\]
Fix $h>0$. For any function $f\in L^2_h(h\Z)$, we define the $H^s_h$ and $\dot{H}^s_h$ norms of $f$, $s\in\R$, by
\[
\|f\|_{H^s_h}=  \left(\int_{-\f{\pi}{h}}^{\f{\pi}{h}}(1+|\xi|^{2})^s |\hat{f}(\xi)|^2 d\xi \right)^{1/2}
\]
and
\beq\label{def:H}
\|f\|_{ \dot{H}^s_h}= \left( \int_{-\f{\pi}{h}}^{\f{\pi}{h}}|\xi|^{2s} |\hat{f}(\xi)|^2 d\xi\right)^{1/2}\notag
\eeq
which are finite, see \cite[Proposition 1.2]{HY2018} for the proof. We will use equivalence among the following norms when $s=1$. A simple calculation gives that $\|\cdot\|_{\dot{H}^1_h}$ and $\|D_h^+ \cdot\|_{L^2_h}$ are equivalent, more precisely,
\beq \label{eq:eq}
\f{2}{\pi}\|f\|_{\dot{H}^1_h} \leq \|D^+_h f\|_{L^2_h}\leq \|f\|_{\dot{H}^1_h},
\eeq
where the forward difference operator $D^+_h$ is given in \eqref{diff}.
Therefore, we can use the norm
\beq\label{eq:equivalent norm}
\left(\|f\|^2_{L^2_h}+\|D^+_h f\|^2_{L^2_h}\right)^{1/2}
\eeq
of $f \in L^2_h(h\Z)$ instead of $\|f\|_{H^1_h}$. 
Noting
\begin{equation}\label{eq:norm-difference}
\|D^+_hf\|_{L^2_h}\leq 2 h^{-1} \|f\|_{L^2_h}\notag
\end{equation}
for any $f\in L^2_h(h\Z)$ deduces
\begin{equation}\label{eq:relation L^2 H^1}
\|f\|_{L^2_h} \le \|f\|_{H^1_h}\leq C_h\|f\|_{L^2_h}\, ,
\end{equation}
where $C_h$ is dependent on $h$. 


We gather some elementary inequalities following from \cite{HY2019, HY2018, KLS2013} and \eqref{eq:eq}. We use the notation $f\lesssim g$ when there exists a positive constant $C$, independent of $h$, such that $f \leq Cg$.
\begin{lemma}\label{lem:G-N} 
 Let $h\in (0,1]$.
 \begin{theoremlist}
 \item
 (Gagliardo-Nirenberg Inequality)
If $2<q \le \infty$ and $\theta= \f 1 2 - \f 1 q $, then
\beq \label{ineq:G-N}
\|f\|_{L^q_h }\lesssim \|f\|_{L^2_h}^{1-\theta}\|D^+_h f\|_{L^2_h}^{\theta}.
\eeq
\item (Sobolev Inequality)
\beq \label{ineq:Sobolev}
\|f\|_{L^\infty_h}\lesssim \|f\|_{H^1_h}.
\eeq
\end{theoremlist}
\end{lemma}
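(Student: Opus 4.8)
The plan is to reduce both inequalities to the single scale $h=1$ by an exact rescaling, and then prove the $h=1$ statements directly; the point of the rescaling is that the powers of $h$ cancel identically, so the resulting constant is automatically independent of $h$. Given $f:h\Z\to\C$, set $g:\Z\to\C$ by $g(m):=f(hm)$. Directly from the definitions of the norms one gets $\|f\|_{L^q_h}=h^{1/q}\|g\|_{\ell^q}$, $\|f\|_{L^2_h}=h^{1/2}\|g\|_{\ell^2}$, and, since $(D^+_hf)(hm)=h^{-1}(D^+_1g)(m)$, also $\|D^+_hf\|_{L^2_h}=h^{-1/2}\|D^+_1g\|_{\ell^2}$. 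Substituting these into \eqref{ineq:G-N}, the right-hand side carries the factor $h^{(1-\theta)/2-\theta/2}=h^{1/2-\theta}$, and because $\theta=\tfrac12-\tfrac1q$ we have $1/2-\theta=1/q$, matching the factor $h^{1/q}$ on the left. Hence \eqref{ineq:G-N} for general $h$ is equivalent, with the same constant, to the discrete Gagliardo--Nirenberg inequality on $\Z$,
\[
\|g\|_{\ell^q}\lesssim \|g\|_{\ell^2}^{1-\theta}\|D^+_1g\|_{\ell^2}^{\theta}.
\]

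To establish this $h=1$ inequality I would first treat the endpoint $q=\infty$ (so $\theta=\tfrac12$). Writing $|g(n)|^2=\sum_{m\le n}\big(|g(m)|^2-|g(m-1)|^2\big)$ (the telescoping sum converges since $g\in\ell^2$) and using the identity $|a|^2-|b|^2=\Real\big[(a-b)\overline{(a+b)}\big]$, each summand is bounded by $|(D^+_1g)(m-1)|\,(|g(m)|+|g(m-1)|)$. Cauchy--Schwarz then gives $\|g\|_{\ell^\infty}^2\le 2\|D^+_1g\|_{\ell^2}\|g\|_{\ell^2}$, which is the claim for $q=\infty$. For finite $2<q<\infty$ I would interpolate through this endpoint: from $\|g\|_{\ell^q}^q\le \|g\|_{\ell^\infty}^{q-2}\|g\|_{\ell^2}^2$ one obtains $\|g\|_{\ell^q}\le\|g\|_{\ell^\infty}^{1-2/q}\|g\|_{\ell^2}^{2/q}$, and inserting the $\ell^\infty$ bound produces exactly the exponents $1-\theta$ on $\|g\|_{\ell^2}$ and $\theta$ on $\|D^+_1g\|_{\ell^2}$, since $\tfrac12(1-\tfrac2q)=\theta$ and $\tfrac12(1-\tfrac2q)+\tfrac2q=1-\theta$. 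This proves (i).

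For the Sobolev inequality (ii) I would specialize (i) to $q=\infty$, giving $\|f\|_{L^\infty_h}\lesssim\|f\|_{L^2_h}^{1/2}\|D^+_hf\|_{L^2_h}^{1/2}$, and then control the geometric mean by the $H^1_h$-norm. Using $\sqrt{AB}\le\sqrt{A^2+B^2}$ with $A=\|f\|_{L^2_h}$ and $B=\|D^+_hf\|_{L^2_h}$, we get
\[
\|f\|_{L^\infty_h}\lesssim \big(\|f\|_{L^2_h}^2+\|D^+_hf\|_{L^2_h}^2\big)^{1/2}\lesssim \|f\|_{H^1_h},
\]
where the last step is the equivalence of \eqref{eq:equivalent norm} with $\|\cdot\|_{H^1_h}$ coming from \eqref{eq:eq}.

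The only genuinely delicate point is the uniformity of the constant in $h$, and the rescaling above resolves it cleanly precisely because the $h$-powers cancel identically (the computation $1/2-\theta=1/q$ is what makes this work). Everything that remains is a one-dimensional summation-by-parts estimate together with Hölder interpolation, i.e. exactly the elementary facts drawn from the cited references; no use of $h\le 1$ is needed for (i), and only the $h$-independent norm equivalences enter (ii).
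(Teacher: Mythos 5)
Your proof is correct. Note that the paper does not actually prove Lemma \ref{lem:G-N} in full: it is quoted from the cited references, and the only piece proved in the text is the endpoint case \eqref{ineq:G-N-infty}. For that endpoint your argument is essentially the paper's own --- a telescoping identity for $|f(x_m)|^2$ followed by Cauchy--Schwarz --- except that the paper works directly at scale $h$ and telescopes from both sides ($x\ge x_m$ and $x<x_m$), which yields the clean constant $1$ in \eqref{ineq:G-N-infty} (stated there with $\le$, not $\lesssim$), whereas your one-sided version gives $\sqrt{2}$; this is immaterial for the lemma as stated but worth knowing since the paper later uses the constant-$1$ form. What you add beyond the paper is a complete, self-contained derivation of the general case: the rescaling to $h=1$ (correct, and indeed redundant in the sense that the telescoping argument is already scale-invariant, but it makes the $h$-uniformity transparent via the identity $\tfrac12-\theta=\tfrac1q$), the standard $\ell^\infty$--$\ell^2$ interpolation $\|g\|_{\ell^q}\le\|g\|_{\ell^\infty}^{1-2/q}\|g\|_{\ell^2}^{2/q}$ with the exponent bookkeeping checked correctly, and the deduction of \eqref{ineq:Sobolev} from the endpoint together with $\|D^+_hf\|_{L^2_h}\le\|f\|_{\dot H^1_h}$ from \eqref{eq:eq}. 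All steps are sound and the constants are uniform in $h$ as claimed.
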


As a special case of the Gagliardo-Nirenberg inequality when $q=\infty$, we have
\beq \label{ineq:G-N-infty}
\|f\|_{L^{\infty}_h }\le \|f\|_{L^2_h}^{\f{1}{2}}\|D^+_h f\|_{L^2_h}^{\f{1}{2}}.
\eeq
Indeed, for any $f\in L_h^2(h\Z)$ and $x_m\in h\Z$, we observe that
\begin{equation*}\label{eq:sum1}
  h\sum _{h\Z \ni x \ge x_m}\Bigl(f(x+h)\overline{D_h^+f(x)}+ \overline{f(x)}D_h^+f(x)\Bigr)
  = \sum _{h\Z \ni x \ge x_m} \Bigl(|f(x+h)|^2-|f(x)|^2\Bigr)=-|f(x_m)|^2
\end{equation*}
by telescoping. Similarly, we have
\begin{equation*}\label{eq:sum2}
  |f(x_m)|^2= h\sum _{h\Z \ni x < x_m}\Bigl(f(x+h)\overline{D_h^+f(x)}+ \overline{f(x)}D_h^+f(x)\Bigr).
  \end{equation*}
Therefore, using the Cauchy-Schwarz inequality, we have
\[
2|f(x_m)|^2 \leq h\sum_{x\in h\Z} \Bigl(|f(x+h)||D_h^+f(x)|+ |f(x)||D_h^+f(x)|\Bigr)\leq2 \|f\|_{L_h^2}\|D_h^+f\|_{L_h^2}
\]
which deduces \eqref{ineq:G-N-infty}.

\medskip
For a Banach space $X$ with norm $\|\cdot\|_X$ and an interval $I$, we denote by $L^q(I, X)$, $1\leq q < \infty$, the space of all functions $u$ for which
\[
\|u\|_{L^q(I, X)} =\left(\int_I \|u(t)\|_X^q dt\right) ^{\f 1 q}
\]
is finite. If $q=\infty$, use the essential supremum instead.
$\calC(I, X)$ is the space of all continuous functions $u:I \to X$. When $I$ is compact, it is a Banach space with norm
$$
\|u\|_{\calC(I, X)}=\sup _{t\in I}\|u(t)\|_{X}
$$

Now we introduce some properties of the operator $T_{h,r}= e^{ir\Delta_h}$.
The operator $T_{h,r}$ is unitary on $L^2_h(h\Z)$ and, therefore,
\[
\|T_{h,r}f\|_{L^2_h}= \|f\|_{L^2_h}
\]
for all $r\in\R$. Moreover, if $f\in L^2_h$, then
\begin{equation}\label{eq:isometry_H^1}
\|T_{h,r}f\|_{H^1_h}= \|f\|_{H^1_h}
\end{equation}
for all $r\in\R$ since the Fourier transform of $T_{h,r}f$ is
\[
(T_{h,r} f)\char`\^(\xi)=\exp\left(\f{4ir}{h^2} \sin^2\left(\f{h\xi}{2}\right)\right)\hat{f}(\xi).
\]
As a first step, we note that certain space time norm of $T_{h,r}f$ is uniformly bounded in $h$.
\begin{lemma} \label{lem:8}
Let $h\in (0,1]$, then we have
  \beq
  \int_0 ^1 \|T_{h,r}f\|_{L^8_h}^8 dr \lesssim \|f\|^7_{L^2_h} \|D_h ^+f\|_{L^2_h}\notag
  \eeq
for all $f\in L^2_h(h\Z)$.
\end{lemma}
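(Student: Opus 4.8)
The plan is to read the left-hand side as a discrete space--time (Strichartz) norm and the right-hand side as a fractional Sobolev quantity, reducing the lemma to a single uniform-in-$h$ Strichartz estimate followed by an elementary interpolation. Concretely, I would first isolate the inequality
\[
\int_0^1\|T_{h,r}f\|_{L^8_h}^8\,dr\lesssim\|f\|_{\dot{H}^{1/8}_h}^8,
\]
which carries all the analytic content, and then deduce the stated bound from it by real-variable manipulations.

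For the deduction, note that $\|f\|_{\dot{H}^{1/8}_h}^2=\int_{-\pi/h}^{\pi/h}|\xi|^{1/4}|\hat f(\xi)|^2\,d\xi$; writing the integrand as $(|\hat f|^2)^{7/8}(|\xi|^2|\hat f|^2)^{1/8}$ and applying H\"older with exponents $8/7$ and $8$ gives $\|f\|_{\dot{H}^{1/8}_h}^2\le\|f\|_{L^2_h}^{7/4}\|f\|_{\dot{H}^1_h}^{1/4}$, hence $\|f\|_{\dot{H}^{1/8}_h}^8\le\|f\|_{L^2_h}^{7}\|f\|_{\dot{H}^1_h}$. Combining this with the norm equivalence \eqref{eq:eq}, which yields $\|f\|_{\dot{H}^1_h}\le\frac{\pi}{2}\|D_h^+f\|_{L^2_h}$, turns the displayed Strichartz bound into exactly $\int_0^1\|T_{h,r}f\|_{L^8_h}^8\,dr\lesssim\|f\|_{L^2_h}^7\|D_h^+f\|_{L^2_h}$. (Equivalently, using the isometry $\|T_{h,r}f\|_{L^2_h}=\|f\|_{L^2_h}$ together with the pointwise bound $\|g\|_{L^8_h}^8\le\|g\|_{L^\infty_h}^6\|g\|_{L^2_h}^2$, one may reduce instead to the $L^6_rL^\infty_h$ estimate $\int_0^1\|T_{h,r}f\|_{L^\infty_h}^6\,dr\lesssim\|f\|_{\dot{H}^{1/6}_h}^6$ and interpolate in the same way; I prefer the $(8,8)$ route since it requires only a single Strichartz input.)

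The heart of the matter, and the step I expect to be the main obstacle, is the uniform-in-$h$ Strichartz estimate itself. Since $T_{h,r}$ is the Fourier multiplier with symbol $e^{ir\omega_h(\xi)}$, $\omega_h(\xi)=\frac{4}{h^2}\sin^2(\frac{h\xi}{2})$, its dispersive behaviour is governed by $\omega_h''(\xi)=2\cos(h\xi)$, which \emph{vanishes} at $\xi=\pm\pi/(2h)$. At these degenerate, Airy-type frequencies van der Corput yields only the slower decay $|r|^{-1/3}$ instead of the continuum rate $|r|^{-1/2}$; this is precisely why the scale-invariant bound $\|f\|_{L^2_h}^8$ \emph{cannot} hold uniformly in $h$ (a global $TT^\ast$/Hardy--Littlewood--Sobolev argument with the full dispersive estimate loses a factor $h^{-1}$), and why the weaker weight $\|f\|_{\dot{H}^{1/8}_h}$ is the correct one. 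To establish the estimate uniformly I would localise in frequency, obtaining $|r|^{-1/2}$ decay on the shells where $|\omega_h''|\gtrsim1$ and the degenerate $|r|^{-1/3}$ decay only in a neighbourhood of $\xi=\pm\pi/(2h)$, run $TT^\ast$ plus Hardy--Littlewood--Sobolev on each shell, and sum dyadically: the low-frequency shells carry the continuum-type gain while the top shell contributes at the $\sigma=1/3$-admissible endpoint $(8,8)$, and the two balance to produce exactly the $\dot{H}^{1/8}_h$ weight. Because this uniform discrete Strichartz estimate is available in the cited works \cite{HY2019,HY2018,KLS2013}, in practice I would invoke it and present only the short interpolation above; either way, the delicate point is the uniformity in $h$ at the degenerate frequencies.
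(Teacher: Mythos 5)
Your proposal is correct and follows essentially the same route as the paper: invoke the uniform discrete Strichartz estimate $\int_0^1\|T_{h,r}f\|_{L^8_h}^8\,dr\lesssim\|f\|_{\dot{H}^{1/8}_h}^8$ (which the paper cites from \cite{HY2018} for the admissible pair $(8,8)$ rather than reproving it), then the identical H\"older interpolation $\|f\|_{\dot{H}^{1/8}_h}^2\le\|f\|_{L^2_h}^{7/4}\|f\|_{\dot{H}^1_h}^{1/4}$ and the norm equivalence \eqref{eq:eq}. Your additional discussion of the degenerate frequencies is a reasonable sketch of why the $\dot{H}^{1/8}_h$ weight is needed, but it is not required since the Strichartz input is taken as a black box in the paper as well.
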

\begin{proof}
By the uniform Strichartz estimate for the discrete Schr\"odinger equation, see the remark below, we have
\beq \label{ineq:stricharz}
 \int_0 ^1 \|T_{h,r}f\|_{L^8_h}^8 dr \lesssim \|f\|_{\dot{H}_h^{\f 1 8}}^8.\notag
\eeq
Moreover, by H\"older's inequality,
\[
\begin{aligned}
\|f\|_{\dot{H}^{\f{1}{8}}_h}^2 = 
\int_{-\f{\pi}{h}}^{\f{\pi}{h}} |\xi|^{\f 14} |\hat{f}(\xi)|^2 d\xi \leq 
\left(\int_{-\f{\pi}{h}}^{\f{\pi}{h}}|\hat{f}(\xi)|^2 d\xi\right)^{\f 7 8}
\left(\int_{-\f{\pi}{h}}^{\f{\pi}{h}} |\xi|^2 |\hat{f}(\xi)|^2 d\xi\right)^{\f 1 8}
=\|f\|_{L^2_h}^{\f 74} \|f\|_{\dot{H}_h^1}^{\f 14}
\,
\end{aligned}
\]
holds. Therefore, combining these two inequalities and \eqref{eq:eq} completes the proof.
\end{proof}

\begin{remark}
 For any $ 2 \le q,r \le \infty$ satisfying
  \[
  \f 3q +\f 1r = \f 12 \, ,
  \]
  the uniform Strichart estimate for the discrete Schr\"odinger equation \eqref{eq:discrete_Schrodinger}
  \[
  \|e^{it\Delta_h}f\|_{L^q_t(\R,L_h^r(h\Z))} \lesssim \|f\|_{\dot{H}_h^{\f 1 q}}
  \]
  holds, see \cite{HY2018} for its proof.
\end{remark}

Denote the nonlinear term of the discrete equation \eqref{eq:dNLS} by
\beq \label{def:Q}
\la Q_h \ra ( f) := \int_0 ^1 T_{h,r}^{-1} (| T_{h,r}f|^{p-1}  T_{h,r}f)dr
\eeq
for $f\in L^2_h(h\Z)$.
We prove an estimate  for $\la Q_h \ra $ which is uniform in $h$.
\begin{lemma}\label{lemma:Hbound}
Let $h\in (0,1]$ and $p>1$. Then
\beq
\| \la Q_h \ra ( f)\|_{H^1_h}\lesssim\|f\|_{H_h^1}^p\notag
\eeq
for all $f\in L^2_h(h\Z)$.
\end{lemma}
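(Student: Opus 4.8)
The plan is to use the norm equivalence \eqref{eq:equivalent norm} and bound $\|\la Q_h \ra(f)\|_{L^2_h}$ and $\|D_h^+\la Q_h \ra(f)\|_{L^2_h}$ separately. Throughout I would write $g_r:=T_{h,r}f$ and exploit two structural facts: each $T_{h,r}^{\pm1}$ is unitary on $L^2_h$, and $D_h^+$ commutes with $T_{h,r}^{\pm1}$ since both are Fourier multipliers (with symbols $(e^{ih\xi}-1)/h$ and $\exp(\f{4ir}{h^2}\sin^2(\f{h\xi}{2}))$, respectively). Combined with \eqref{eq:eq} and \eqref{eq:isometry_H^1}, these give $\|g_r\|_{H^1_h}=\|f\|_{H^1_h}$ and $\|D_h^+g_r\|_{L^2_h}\le\|g_r\|_{\dot H^1_h}=\|f\|_{\dot H^1_h}\le\|f\|_{H^1_h}$, uniformly in $r\in[0,1]$.

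For the $L^2_h$ part, Minkowski's integral inequality applied to the definition \eqref{def:Q} together with the unitarity of $T_{h,r}^{-1}$ reduces matters to $\int_0^1\||g_r|^{p-1}g_r\|_{L^2_h}\,dr=\int_0^1\|g_r\|_{L^{2p}_h}^{\,p}\,dr$. Since $2p>2$, the Gagliardo--Nirenberg inequality \eqref{ineq:G-N} with $\theta=\f12-\f1{2p}$ bounds $\|g_r\|_{L^{2p}_h}\lesssim\|g_r\|_{L^2_h}^{1-\theta}\|D_h^+g_r\|_{L^2_h}^{\theta}\lesssim\|f\|_{H^1_h}$ by the uniform estimates above. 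Raising to the $p$-th power and integrating over $r$ yields $\|\la Q_h \ra(f)\|_{L^2_h}\lesssim\|f\|_{H^1_h}^{p}$.

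For the $\dot H^1_h$ part, I would commute $D_h^+$ through $T_{h,r}^{-1}$ and again use Minkowski and unitarity, reducing to control of $\int_0^1\|D_h^+(|g_r|^{p-1}g_r)\|_{L^2_h}\,dr$. The crux is the pointwise nonlinear estimate $\big||a|^{p-1}a-|b|^{p-1}b\big|\lesssim(|a|^{p-1}+|b|^{p-1})\,|a-b|$ for $a,b\in\C$, which I would apply with $a=g_r(x+h)$ and $b=g_r(x)$ to get $|D_h^+(|g_r|^{p-1}g_r)(x)|\lesssim(|g_r(x+h)|^{p-1}+|g_r(x)|^{p-1})\,|D_h^+g_r(x)|$. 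Factoring out $\|g_r\|_{L^\infty_h}^{p-1}$ and summing in $x$ gives $\|D_h^+(|g_r|^{p-1}g_r)\|_{L^2_h}\lesssim\|g_r\|_{L^\infty_h}^{p-1}\|D_h^+g_r\|_{L^2_h}$, after which the Sobolev inequality \eqref{ineq:Sobolev} and $\|g_r\|_{H^1_h}=\|f\|_{H^1_h}$ bound this by $\|f\|_{H^1_h}^{p}$ uniformly in $r$; integrating finishes this piece, and adding the two contributions via \eqref{eq:equivalent norm} gives the claim.

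The main obstacle is the discrete chain-rule step: one must show that $D_h^+$ interacts with the non-smooth nonlinearity $z\mapsto|z|^{p-1}z$ the way the continuum derivative would, namely that the difference quotient behaves like $|g_r|^{p-1}D_h^+g_r$. The pointwise inequality above is exactly what delivers this, and its verification for complex arguments and all $p>1$ (where the nonlinearity is only $C^1$ in the range $1<p<2$) is the one genuinely nontrivial ingredient; it follows from the mean value bound $\big||a|^{p-1}a-|b|^{p-1}b\big|\le p\max(|a|,|b|)^{p-1}|a-b|$ together with $\max(|a|,|b|)^{p-1}\lesssim|a|^{p-1}+|b|^{p-1}$. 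Everything else is routine bookkeeping with unitarity, Minkowski's inequality, and the already-established embeddings.
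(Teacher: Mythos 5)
Your proposal is correct and follows essentially the same route as the paper: Minkowski plus unitarity for the $L^2_h$ piece, and the pointwise inequality $||a|^{p-1}a-|b|^{p-1}b|\lesssim(|a|^{p-1}+|b|^{p-1})|a-b|$ applied to the difference quotient, combined with the commutation of $D_h^+$ with $T_{h,r}$, for the derivative piece. The only (immaterial) difference is that for the $L^2_h$ bound the paper estimates $\||g_r|^{p-1}g_r\|_{L^2_h}\le\|g_r\|_{L^\infty_h}^{p-1}\|g_r\|_{L^2_h}$ and invokes the Sobolev inequality, whereas you pass through $\|g_r\|_{L^{2p}_h}^p$ and Gagliardo--Nirenberg.
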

\begin{proof}
Using the Minkowski inequality and unitarity of $T_{h,r}$ on $L_h^2(h\Z)$, we have
\[
\begin{aligned}
\|\la Q_h \ra (f)\|_{L^2_h} &\leq \int_0 ^1 \|T_{h,r}^{-1} (|T_{h,r}f|^{p-1} T_{h,r} f)\|_{L^2_h} dr\\
& = \int_0 ^1 \||T_{h,r} f|^{p-1} T_{h,r} f\|_{L^2_h} dr\\
& \leq \int_0 ^1 \|T_{h,r} f\| _{L_h^\infty}^{p-1} \|T_{h,r} f\| _{L_h^2} dr\\
& \lesssim \|  f\| _{L_h^2} \sup_{r\in [0,1]}\|T_{h,r}f\| _{L_h^\infty}^{p-1}.
\end{aligned}
\]
Thus, by the Sobolev inequality \eqref{ineq:Sobolev} and \eqref{eq:isometry_H^1}, we have
\beq\label{ineq:first}
\|\la Q_h \ra (f)\|_{L^2_h} \lesssim \|  f\| _{L_h^2} \sup_{r\in [0,1]}\|T_{h,r}f\| _{H^1_h}^{p-1} \le \|  f\|^{p} _{H_h^1}.
\eeq
Next, we get an estimate for $\|D^+_h(|f|^{p-1}f)\|_{L^2_h}$ instead of $\||f|^{p-1}f\|_{\dot{H}^1_h}$, see \eqref{eq:eq}. Using
\beq \label{ineq:ele}
||z|^{p-1}z-|w|^{p-1}w|\lesssim (|z|^{p-1}+|w|^{p-1})|z-w|
\eeq
for all $z,w\in \C$  and the definition of $D_h^+$, we obtain
\beq
|D_h^+(|f|^{p-1}f)(x)|\lesssim \left(|f(x+h)|^{p-1}+|f(x)|^{p-1}\right) \f{|f(x+h)-f(x)|}{h}\notag
\eeq
and therefore
\[
\begin{aligned}
\|D^+_h(|f|^{p-1}f)\|_{L^2_h} &
\lesssim \| f\|_{L_h^\infty}^{p-1} \|D^+_h f\|_{L_h^2}
\lesssim \|f\| _{H_h^1}^{p}.
\end{aligned}
\]
Using the Minkowski inequality, again, the fact that $T_{h,r}$ and $D_h^+$ commute, and the last inequality, we obtain
\beq\label{ineq:derivative}
\begin{aligned}
\|D^+_h \la Q_h \ra (f)\|_{L^2_h} \leq \int_0 ^1 \|D^+_h (|T_{h,r} f|^{p-1} T_{h,r} f)\|_{L^2_h} dr
\lesssim \|T_{h,r} f\|  _{H_h^1}^{p} = \|f\| _{H_h^1}^{p}.
\end{aligned}
\eeq
Combining \eqref{ineq:first} and \eqref{ineq:derivative} completes the proof.
\end{proof}


\section{global well-posedness}\label{sec:global well-posedness}

In this section, we fix $h\in (0,1]$. First, we consider the local existence of a unique solution for the integral equation of \eqref{eq:dNLS},
\beq\label{eq:duhamel formula}
u_h(t)=e^{i\dav t \Delta_h }\vphi_h+i\int _0 ^t e^{i\dav (t-s)  \Delta_h  } \la Q_h\ra (u_h(s))ds\, .
\eeq
It can be proven by a standard contraction mapping argument but we give a proof for the reader’s convenience.
 \begin{proposition}\label{prop:local}
 Let $\dav\in\R$, $p>1$ and $h\in (0,1]$. For any initial datum $\vphi_h\in L_h^2(h\Z)$, there exists a unique local solution of \eqref{eq:duhamel formula}.
 \end{proposition}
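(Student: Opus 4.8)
The plan is to set up a standard fixed-point (contraction mapping) argument for the Duhamel equation \eqref{eq:duhamel formula}. First I would fix the initial datum $\vphi_h\in L^2_h(h\Z)$ and, for a time $T>0$ and radius $M>0$ to be chosen, introduce the complete metric space
\[
X_{T,M}:=\{u_h\in \calC([-T,T], L^2_h(h\Z)): \sup_{t\in[-T,T]}\|u_h(t)\|_{L^2_h}\le M\}
\]
equipped with the metric induced by the norm $\|u_h\|_{\calC([-T,T],L^2_h)}=\sup_{t\in[-T,T]}\|u_h(t)\|_{L^2_h}$. I would then define the map
\[
(\Phi u_h)(t):=e^{i\dav t\Delta_h}\vphi_h+i\int_0^t e^{i\dav(t-s)\Delta_h}\la Q_h\ra(u_h(s))\,ds
\]
and show that for suitable $T,M$ it maps $X_{T,M}$ into itself and is a contraction, so that Banach's fixed-point theorem yields a unique fixed point, which is the desired local solution.

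For the self-mapping property I would use that $e^{i\dav t\Delta_h}$ is unitary on $L^2_h(h\Z)$, so that $\|e^{i\dav t\Delta_h}\vphi_h\|_{L^2_h}=\|\vphi_h\|_{L^2_h}$, and then bound the Duhamel term by
\[
\Bigl\|\int_0^t e^{i\dav(t-s)\Delta_h}\la Q_h\ra(u_h(s))\,ds\Bigr\|_{L^2_h}\le\int_0^{|t|}\|\la Q_h\ra(u_h(s))\|_{L^2_h}\,ds.
\]
Here the key input is the nonlinear estimate \eqref{ineq:first} from Lemma \ref{lemma:Hbound}, which gives $\|\la Q_h\ra(f)\|_{L^2_h}\lesssim\|f\|_{H^1_h}^p$; combined with \eqref{eq:relation L^2 H^1} this yields a bound of the form $\|\la Q_h\ra(u_h(s))\|_{L^2_h}\le C_h\|u_h(s)\|_{L^2_h}^p\le C_h M^p$, where the constant may depend on $h$ since $h$ is fixed throughout this section. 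Choosing $M=2\|\vphi_h\|_{L^2_h}$ and then $T$ small enough that $C_h M^p T\le\|\vphi_h\|_{L^2_h}$ guarantees $\Phi$ maps $X_{T,M}$ into itself.

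For the contraction estimate I would bound $\|\Phi u_h-\Phi v_h\|_{\calC([-T,T],L^2_h)}$ by $\int_0^{|t|}\|\la Q_h\ra(u_h(s))-\la Q_h\ra(v_h(s))\|_{L^2_h}\,ds$, and so I need a Lipschitz-type estimate for $\la Q_h\ra$. This is the main technical point: using the unitarity of $T_{h,r}$ on $L^2_h$, the elementary inequality \eqref{ineq:ele}, namely $||z|^{p-1}z-|w|^{p-1}w|\lesssim(|z|^{p-1}+|w|^{p-1})|z-w|$, and H\"older's inequality together with the Sobolev bound \eqref{ineq:Sobolev} and \eqref{eq:relation L^2 H^1}, I expect to obtain
\[
\|\la Q_h\ra(f)-\la Q_h\ra(g)\|_{L^2_h}\le C_h\bigl(\|f\|_{L^2_h}^{p-1}+\|g\|_{L^2_h}^{p-1}\bigr)\|f-g\|_{L^2_h}.
\]
On $X_{T,M}$ this gives $\|\Phi u_h-\Phi v_h\|_{\calC}\le C_h M^{p-1}T\,\|u_h-v_h\|_{\calC}$, so shrinking $T$ further (so that $C_h M^{p-1}T<\tfrac12$, say) makes $\Phi$ a contraction. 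The main obstacle is precisely establishing this difference estimate for $\la Q_h\ra$: unlike the single-function bound in Lemma \ref{lemma:Hbound}, it requires carefully handling the power nonlinearity through \eqref{ineq:ele} and controlling the $L^\infty_h$ norms of $T_{h,r}u_h$ and $T_{h,r}v_h$ uniformly in $r\in[0,1]$ via the isometry property \eqref{eq:isometry_H^1} and the Sobolev inequality. Once both properties hold, Banach's fixed-point theorem produces the unique local solution, completing the proof.
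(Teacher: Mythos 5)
Your proposal is correct and follows essentially the same route as the paper: a Banach fixed-point argument on a ball in $\calC([0,M],L^2_h(h\Z))$, with self-mapping and Lipschitz estimates for $\la Q_h\ra$ obtained from the unitarity of $T_{h,r}$, the elementary inequality \eqref{ineq:ele}, and an $h$-dependent bound of $\|\cdot\|_{L^\infty_h}$ by $\|\cdot\|_{L^2_h}$. The only cosmetic difference is that the paper uses the direct embedding $\|f\|_{L^\infty_h}\le h^{-1/2}\|f\|_{L^2_h}$ where you route through the Sobolev inequality and \eqref{eq:relation L^2 H^1}; both give the same $h$-dependent constant, which is harmless since $h$ is fixed here.
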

\begin{proof}
Without loss of generality, we assume that $t>0$.
For each $M>0$ and $a>0$, let
\[
B_{M,a}=\{u_h\in L^\infty([0, M], L_h^2(h\Z))  \; : \; \|u_h\|_{L^\infty([0, M],L_h^2)}\leq a\}
\]
be equipped with the distance
$
d(u_h,v_h)=\|u_h-v_h\|_{L^\infty([0, M],L_h^2)}.
$
Let $0\neq \vphi_h\in  L_h^2(h\Z)$  be fixed. Define the map $\Phi$ on $B_{M,a}$ by
\begin{equation}\label{eq:contraction map}
\Phi(u_h)(t)=e^{it \dav \Delta_h }\vphi_h+i\int _0 ^t e^{i(t-s) \dav \Delta_h  } \la Q_h\ra (u_h(s))ds,
\end{equation}
where $\la Q_h\ra $ is defined in \eqref{def:Q}.
Since $\|f\|_{L^\infty_h}\leq h^{-1/2}\|f\|_{L^2_h}$ for all $f\in  L_h^2(h\Z)$,
we see that
\[
\begin{aligned}
\|\la Q_h \ra (u_h)\|_{L^2_h}&\leq 
 \int_0 ^1 \||T_{h,r}u_h|^{p-1} T_{h,r} u_h\|_{L^2_h} dr\\
& \leq  \int_0 ^1 \|T_{h,r} u_h\| _{L_h^\infty}^{p-1} \|T_{h,r} u_h\| _{L_h^2} dr\\
& \leq h^{-\f{p-1}{2}} \| u_h\|^p _{L_h^2}.
\end{aligned}
\]
Thus,  we have
\[
\begin{aligned}
\|\Phi(u_h)(t)\|_{L^2_h}
& \leq \|\vphi_h\|_{L^2_h} + \int _0 ^t  \|\la Q_h\ra (u_h(s))\|_{L^2_h}ds\leq \|\vphi_h\|_{L^2_h} +  h^{-\f{p-1}{2}}M \|  u_h\|^p _{L^\infty([0,M], L_h^2)}
\end{aligned}
\]
for all $0\leq t\leq M$.
Using \eqref{ineq:ele}, the Sobolev inequality \eqref{ineq:Sobolev}, and the unitarity of $T_{h,r}$, we have
\beq
\begin{aligned}\label{ineq:Lips}
d(\Phi(u_h),\Phi(v_h))&\leq \int_0 ^t \|\la Q_h\ra (u_h(s))-\la Q_h \ra(v_h(s))\|_{ L_h^2} ds \\
&\lesssim \int_0 ^M \|u_h(s)-v_h(s)\|_{L^2_h} \int_0 ^1\left(\|T_{h,r}u_h(s)\|_{L^\infty_h}^{p-1} +\|T_{h,r}v_h(s)\|_{L^\infty_h}^{p-1}\right) dr ds \\
& \lesssim h^{-\f{p-1}{2}} M\left(\|u_h\|_{L^\infty([0, M],L^2_h)}^{p-1} +\|v_h\|_{L^\infty([0, M],L^2_h)}^{p-1}\right) d(u_h, v_h).\notag
\end{aligned}
\eeq
Therefore, there exists a positive constant $C$ such that for all $u_h, v_h \in B_{M,a}$,
\beq
\|\Phi(u_h)\|_{L^\infty([0, M], L^2_h)} \leq \|\vphi_h\|_{L^2_h}+ C h^{-\f{p-1}{2}} Ma^p\notag
\eeq
and
\beq
d(\Phi(u_h),\Phi(v_h))\leq C  h^{-\f{p-1}{2}} Ma^{p-1}d(u_h, v_h).\notag
\eeq
Now, we set $a=2\|\vphi_h\|_{L_h^2}$ and choose $M_+>0$ satisfying
\[
C h^{-\f{p-1}{2}}  M_+(2\|\vphi_h\|_{L_h^2})^{p-1} <\f 1 2,
\]
then we obtain that $\Phi$ is a contraction from $B_{M_+,2 \|\vphi_h\|_{L_h^2}}$ into itself. Thus, Banach's contraction mapping theorem shows that there exists a unique solution $u_h$ of \eqref{eq:duhamel formula} in $B_{M_+,2 \|\vphi_h\|_{L_h^2}}$.
Moreover, by \eqref{eq:contraction map}, $u_h\in\calC([0,M_+],L_h^2)$.
\end{proof}

The solution $u_h\in \calC([-M_-,M_+],L_h^2)$ for some $M_{\pm}>0$, given by Proposition \ref{prop:local}, conserves the mass and the energy, that is, for all $t\in[-M_-,M_+]$,
\beq
\|u_h(t)\|^2_{L^2_h}=\|\vphi_h\|^2_{L^2_h}\notag
\eeq
 and
 \beq
 E(u_h(t))= \f \dav 2 \| D_h^+ u_h(t)\|_{L^2_h}^2 - \f{1}{p+1}\int_0 ^1 \|T_{h,r} u_h(t)\|_{L^{p+1}_h}^{p+1} dr=E(\vphi_h).\notag
 \eeq
Note that, unlike the continuous case, the energy is well-defined even on $L^2_h(h\Z)$.
Indeed, using the Gagliardo-Nirenberg inequality \eqref{ineq:G-N} with $\theta=(p-1)/2(p+1)$, we have
\beq \label{ineq:usingGN}
\begin{aligned}
|E(\vphi_h)| & \leq \f {|\dav|}  {2} \|D^+_h \vphi_h\|_{L^2_h}^2 + \f{1}{p+1}\int_0 ^1 \|T_{h,r} \vphi_h\|_{L^{p+1}_h}^{p+1} dr\\
&\lesssim  \|D^+_h \vphi_h\|_{L^2_h}^2 + \int_0 ^1\|T_{h,r}\vphi_h\|_{L^2_h}^{\f{p+3}{2}}\|T_{h,r}D^+ _h \vphi_h\|_{L^2_h}^{\f{p-1}{2}}dr \\
&=   \| D^+_h \vphi_h\|_{L^2_h}^2 + \| \vphi_h\|_{L^2_h}^{\f{p+3}{2}}\|D^+ _h\vphi_h\|_{L^2_h}^{\f{p-1}{2}}
\end{aligned}
\eeq
for any $\vphi_h \in L_h^2(h\Z)$.
Therefore, it follows from \eqref{eq:norm-difference} that the energy is finite.
Moreover, the mass and energy conserve. To show this, as usual, we multiply \eqref{eq:NLS} by $\overline{u_h}$ and $\partial_t \overline{u_h}$, respectively, and then use summation by parts and the elementary facts that the discrete Laplacian $\Delta_h= D_h^- D_h^+ =D_h^+ D_h^-$ on $L^2_h(h\Z)$ and the adjoint operator of $D_h^+$ is $-D_h^-$, where  $ D_h^- $
 denotes the backward difference operator on $L^2_h(h\Z)$
\[
(D_h^-f)(x):= \f{f(x)-f(x-h)}{h}.
\]
Due to the mass conservation law, the time interval where the solution $u_h$ exists can be extended to $\R$, that is, $u_h$ is in $\calC(\R, L^2_h(h\Z))$.

%


To complete Theorem \ref{thm:globalwellposedness}, it remains to show that the map $\vphi_h \mapsto u_h(t)$ is locally Lipschitz continuous on $L^2_h(h\Z)$ by Gronwall’s inequality.
\begin{proposition}
 Let $\dav\in\R$, $p>1$ and $h\in (0,1]$. If $\vphi_h$ and $\psi_h$ are in $L_h^2(h\Z)$, then there exists a positive constant $C_h$ such that
 \beq
\|u_h-v_h\|_{\calC([-T, T], L^2_h)}\le e^{C_h T}\|\vphi_h-\psi_h\|_{L^2_h}\notag
\eeq
for all $T>0$, where $u_h$ and $v_h$ are the global solutions of \eqref{eq:duhamel formula} with the initial data $\vphi_h$ and $\psi_h$, respectively.
\end{proposition}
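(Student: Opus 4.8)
The plan is to run a standard Gronwall argument directly on the Duhamel formulation \eqref{eq:duhamel formula}, exactly paralleling the Lipschitz estimate already carried out in the proof of Proposition \ref{prop:local}, but now bootstrapping it over an arbitrary finite time interval using the mass conservation law. As in that proof, I would assume $t\ge 0$ without loss of generality (the interval $[-T,0]$ is handled identically by time reversal). Subtracting the two Duhamel formulas and using that $e^{i\dav(t-s)\Delta_h}$ is unitary on $L^2_h(h\Z)$, I get for $0\le t\le T$
\[
\|u_h(t)-v_h(t)\|_{L^2_h}\le \|\vphi_h-\psi_h\|_{L^2_h}+\int_0^t \|\la Q_h\ra(u_h(s))-\la Q_h\ra(v_h(s))\|_{L^2_h}\,ds.
\]

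The core of the argument is the pointwise bound on the nonlinear difference. Applying the Minkowski inequality and the unitarity of $T_{h,r}$ on $L^2_h(h\Z)$ to the definition \eqref{def:Q}, then the elementary inequality \eqref{ineq:ele} followed by Hölder, I would estimate
\[
\|\la Q_h\ra(u_h(s))-\la Q_h\ra(v_h(s))\|_{L^2_h}
\lesssim \int_0^1\left(\|T_{h,r}u_h(s)\|_{L^\infty_h}^{p-1}+\|T_{h,r}v_h(s)\|_{L^\infty_h}^{p-1}\right)\|T_{h,r}(u_h(s)-v_h(s))\|_{L^2_h}\,dr.
\]
Since $T_{h,r}$ is unitary on $L^2_h(h\Z)$, the last factor equals $\|u_h(s)-v_h(s)\|_{L^2_h}$. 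For the $L^\infty_h$ factors I would use the crude bound $\|f\|_{L^\infty_h}\le h^{-1/2}\|f\|_{L^2_h}$ together with $\|T_{h,r}u_h(s)\|_{L^2_h}=\|u_h(s)\|_{L^2_h}$, and then invoke mass conservation, $\|u_h(s)\|_{L^2_h}=\|\vphi_h\|_{L^2_h}$ and $\|v_h(s)\|_{L^2_h}=\|\psi_h\|_{L^2_h}$, to make the coefficient independent of $s$. This yields
\[
\|\la Q_h\ra(u_h(s))-\la Q_h\ra(v_h(s))\|_{L^2_h}\le C_h\,\|u_h(s)-v_h(s)\|_{L^2_h},
\]
with $C_h:=C\,h^{-(p-1)/2}\bigl(\|\vphi_h\|_{L^2_h}^{p-1}+\|\psi_h\|_{L^2_h}^{p-1}\bigr)$.

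Combining the two displays gives the integral inequality $\|u_h(t)-v_h(t)\|_{L^2_h}\le \|\vphi_h-\psi_h\|_{L^2_h}+C_h\int_0^t\|u_h(s)-v_h(s)\|_{L^2_h}\,ds$, and Gronwall's inequality immediately produces $\|u_h(t)-v_h(t)\|_{L^2_h}\le e^{C_h t}\|\vphi_h-\psi_h\|_{L^2_h}$; taking the supremum over $t\in[-T,T]$ finishes the proof. I do not expect a genuine obstacle here: the estimate is essentially the contraction bound from Proposition \ref{prop:local} promoted to a global-in-time statement. The only point requiring mild care is obtaining a coefficient $C_h$ that is uniform in $s$, which is precisely where the mass conservation law established earlier is used; note that $C_h$ is permitted to depend on $h$ (indeed it degenerates as $h\to0$), so the simplest $L^\infty_h$-bound suffices and no Gagliardo–Nirenberg refinement is needed for this particular statement.
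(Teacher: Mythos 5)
Your proposal is correct and follows essentially the same route as the paper: subtract the two Duhamel formulas, use unitarity of $e^{i\dav(t-s)\Delta_h}$ and of $T_{h,r}$, bound the nonlinear difference via \eqref{ineq:ele} and the crude estimate $\|f\|_{L^\infty_h}\le h^{-1/2}\|f\|_{L^2_h}$ together with mass conservation, and conclude by Gronwall with $C_h=C h^{-\frac{p-1}{2}}\bigl(\|\vphi_h\|_{L^2_h}^{p-1}+\|\psi_h\|_{L^2_h}^{p-1}\bigr)$. This matches the paper's argument, which likewise just promotes the Lipschitz estimate from Proposition \ref{prop:local} to a global-in-time statement.
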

\begin{proof}
Without loss of generality, we assume that $t>0$. Since
$$
u_h(t)-v_h(t)= e^{i\dav t \Delta_h}(\vphi_h - \psi_h) +i\int_0 ^t e^{i\dav (t-s ) \Delta_h}\left( \la Q_h\ra (u_h(s ))- \la Q_h\ra (v_h(s ))\right)ds ,
$$
we apply the same argument in the proof of Proposition \ref{prop:local} and the mass conservation law to obtain
\begin{align*}
\|u_h(t)-v_h(t)\|_{L_h^2}
&\leq \|\vphi_h - \psi_h\|_{L^2_h}+\int_0 ^t \|\la Q_h\ra (u_h(s ))- \la Q_h\ra (v_h(s ))\|_{L^2_h} ds \\
& \le  \|\vphi_h - \psi_h\|_{L^2_h}+C h^{-\f{p-1}{2}} \left(\|\vphi_h\|_{L^2_h}^{p-1}+\|\psi_h\|_{L^2_h}^{p-1}\right) \int _0 ^t \| u(  s)-v(s )\| _{L^2_h}ds
\end{align*}
for all $t\leq T$.
It follows from Gronwall's inequality that
$$
\|u_h(t)-v_h(t)\|_{L_h^2}\le e^{C h^{-\f{p-1}{2}}(\|\vphi_h\|_{L^2_h}^{p-1}+\|\psi_h\|_{L^2_h}^{p-1})t} \|\vphi_h-\psi_h\|_{L_h^2}\leq e^{C_h T} \|\vphi_h- \psi_h\|_{L_h^2}
$$
for all $t\leq T$, where $C_h=C h^{-\f{p-1}{2}}(\|\vphi_h\|_{L^2_h}^{p-1}+\|\psi_h\|_{L^2_h}^{p-1})$.
\end{proof}

\section{Continuum limit}\label{sec:continuum limit}
In this section, we directly compare $u(t)$ and $p_hu_h(t)$ to get a strong convergence in $L^2(\R)$ as in \cite{HY2019}, where $u$ and $u_h$ are the global solutions of \eqref{eq:NLS} and \eqref{eq:dNLS} with the initial data $\vphi$ and $\vphi_h$, respectively. Recall that $\vphi_h$ is the discretization of $\vphi \in H^1(\R)$. First, we collect some  properties, in the form we need, of discretization and linear interpolation from \cite{HY2019, KLS2013}.
\begin{lemma} \label{lem:boundedness1} 
 \begin{theoremlist}
 \item
If $f\in H^1(\R)$, then its discretization $f_h$ satisfies
\beq \label{boundedness1}
\|f_h\|_{L^2_h}\leq \|f\|_{L^2} \;\; \text{and} \;\; \|D_h^+f_h\|_{L^2_h}\leq \|f'\|_{L^2}.\notag
\eeq
 \item
 If $f_h\in L^2_h(h\Z)$, then
 \beq \label{boundedness2}
\|p_hf_h\|_{H^1}\lesssim \|f_h\|_{H_h^1}.\notag
\eeq
\end{theoremlist}
\end{lemma}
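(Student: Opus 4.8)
The plan is to prove both parts by direct computation, reducing each global norm to a sum of estimates over the individual cells $[x_m, x_m+h)$ and then summing. The only analytic inputs are the Cauchy--Schwarz (equivalently Jensen) inequality applied to the averaging that defines the discretization, the fundamental theorem of calculus for $f\in H^1(\R)$, and the elementary observation that for each fixed shift the translated cells $\{[x+t,\,x+h+t]:x\in h\Z\}$ tile $\R$, so that summing a local integral over $x\in h\Z$ recovers the full integral over $\R$.

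For part (i), the first inequality is immediate: writing $f_h(x)=\frac1h\int_x^{x+h}f$, Jensen's inequality gives $|f_h(x)|^2\le\frac1h\int_x^{x+h}|f|^2$, and multiplying by $h$ and summing over $x\in h\Z$ collapses the right-hand side to $\|f\|_{L^2}^2$ since the cells $[x,x+h)$ partition $\R$. For the second inequality I would first rewrite the forward difference as a double average of $f'$: after the substitution $x'\mapsto x'+h$ and the identity $f(y+h)-f(y)=\int_0^h f'(y+t)\,dt$, one gets
\[
(D_h^+f_h)(x)=\frac{1}{h^2}\int_x^{x+h}\!\!\int_0^h f'(y+t)\,dt\,dy .
\]
Applying Cauchy--Schwarz to this average over the rectangle of measure $h^2$ yields $|(D_h^+f_h)(x)|^2\le\frac{1}{h^2}\int_x^{x+h}\int_0^h|f'(y+t)|^2\,dt\,dy$; then $h\sum_x(\cdots)$, Fubini in $t$, and the tiling property (for each fixed $t$) produce exactly $\|f'\|_{L^2}^2$.

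For part (ii) the derivative is the clean part: on each cell $p_hf_h$ is affine, so $(p_hf_h)'$ equals the constant $(D_h^+f_h)(x_m)$ there, and integrating its square cell by cell gives the exact identity $\|(p_hf_h)'\|_{L^2}^2=\|D_h^+f_h\|_{L^2_h}^2$. For the $L^2$ part I would evaluate $\int_{x_m}^{x_m+h}|p_hf_h|^2$ by rescaling to $[0,1]$: for the affine interpolant between $a=f_h(x_m)$ and $b=f_h(x_m+h)$ one computes $\int_0^1|(1-s)a+sb|^2\,ds=\frac13(|a|^2+\Real(a\bar b)+|b|^2)\le\frac12(|a|^2+|b|^2)$, so each cell contributes at most $\frac{h}{2}(|f_h(x_m)|^2+|f_h(x_m+h)|^2)$; summing and reindexing gives $\|p_hf_h\|_{L^2}^2\le\|f_h\|_{L^2_h}^2$. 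Adding the two pieces yields $\|p_hf_h\|_{H^1}^2\le\|f_h\|_{L^2_h}^2+\|D_h^+f_h\|_{L^2_h}^2$, which is $\lesssim\|f_h\|_{H^1_h}^2$ by the norm equivalence \eqref{eq:eq}.

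The computations are all routine; the only step requiring a little care is the derivative bound in (i), where one must correctly pass from the double-average representation of $D_h^+f_h$ through Cauchy--Schwarz and Fubini and verify that the shifted cells tile $\R$ for each fixed $t\in[0,h]$, so that the $x$-summation produces $\|f'\|_{L^2}^2$ with constant exactly $1$ rather than something $h$-dependent. Everything else is bookkeeping.
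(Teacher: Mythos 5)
Your proof is correct. The paper itself does not prove this lemma --- it is quoted ``in the form we need'' from \cite{HY2019, KLS2013} --- and your cell-by-cell computation (Jensen for the averaging, the double-average representation of $D_h^+f_h$ with Cauchy--Schwarz and Fubini for the derivative bound, and the exact identities for the piecewise-affine interpolant) is precisely the standard argument given in those references, with the constants coming out as stated.
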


\begin{lemma} \label{lem:linear} 
 \begin{theoremlist}
\item
 If $f_h$ is the discretization of $f\in H^1(\R)$, then
 \beq\label{eq:linear0}
 \|p_h f_h- f\|_{L^2} \lesssim   h\|f\|_{H^1}.\notag
 \eeq
 \item
  If $f\in H^1(\R)$ and $f_h\in L^2_h(h\Z)$, then
\beq \label{eq:linear1}
\|p_h e^{it\Delta_h }f_h- e^{it\partial_x^2}f\|_{L^2}\lesssim h^{\f 1 2}|t| \bigg\{ \|f_h\|_{H^1_h}+ \|f\|_{H^1} \bigg\} + \|p_h f_h - f\|_{L^2}.\notag
\eeq
\end{theoremlist}
\end{lemma}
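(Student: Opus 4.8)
For part (i) my plan is an interval-by-interval quadrature estimate. Fix $x_m \in h\Z$; on $[x_m,x_m+h)$ the interpolant $p_hf_h$ is the affine function joining the two local averages $f_h(x_m)=\f1h\int_{x_m}^{x_m+h}f$ and $f_h(x_m+h)$. I would write
\[
(p_hf_h)(x)-f(x)=\big(f_h(x_m)-f(x)\big)+\f{f_h(x_m+h)-f_h(x_m)}{h}(x-x_m)
\]
and apply the triangle inequality in $L^2([x_m,x_m+h))$. Since $f_h(x_m)$ is exactly the mean of $f$ on the interval, the Poincar\'e inequality on an interval of length $h$ controls the first piece by $\lesssim h\,\|f'\|_{L^2([x_m,x_m+h])}$, while the second piece has $L^2$-norm $\lesssim h^{3/2}\,|D_h^+f_h(x_m)|$. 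Squaring, summing over $m$, and invoking Lemma~\ref{lem:boundedness1}(i) in the form $\sum_m|f_h(x_m+h)-f_h(x_m)|^2=h\,\|D_h^+f_h\|_{L^2_h}^2\le h\,\|f'\|_{L^2}^2$, both contributions are $\lesssim h^2\|f\|_{H^1}^2$, which is the claim in part (i).

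For part (ii) I would pass to the Fourier side and compare symbols, following the scheme of \cite{HY2019}. The continuum propagator $e^{it\partial_x^2}$ is the multiplier $e^{-it\xi^2}$; the discrete propagator $e^{it\Delta_h}$ is the multiplier $e^{it\omega_h(\xi)}$, where $\omega_h(\xi)=-\f4{h^2}\sin^2\!\big(\f{h\xi}{2}\big)$ is the symbol of $\Delta_h$ (so $\omega_h(\xi)\to-\xi^2$); and, after extending the discrete transform $\f{2\pi}{h}$-periodically, linear interpolation acts as the nonnegative multiplier $\chi_h(\xi)=\f{4\sin^2(h\xi/2)}{h^2\xi^2}$ (the transform of the hat function), which satisfies $0\le\chi_h\le1$, vanishes at the nonzero points of $\f{2\pi}{h}\Z$, and decays like $(h\xi)^{-2}$ away from the central cell. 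Thus $p_he^{it\Delta_h}f_h$ becomes $\chi_h(\xi)e^{it\omega_h(\xi)}\widehat{f}_{h,\mathrm{per}}(\xi)$ on the Fourier side. The first step is to insert $e^{it\partial_x^2}p_hf_h$ and use unitarity of $e^{it\partial_x^2}$ on $L^2(\R)$, giving
\[
\|p_he^{it\Delta_h}f_h-e^{it\partial_x^2}f\|_{L^2}\le\big\|p_he^{it\Delta_h}f_h-e^{it\partial_x^2}p_hf_h\big\|_{L^2}+\|p_hf_h-f\|_{L^2},
\]
so the last term of the claim is free, and it remains to estimate $\big\|\chi_h\,(e^{it\omega_h}-e^{-it\xi^2})\,\widehat{f}_{h,\mathrm{per}}\big\|_{L^2(\R)}$.

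Here I would split the frequency integral at $|\xi|=h^{-1/2}$. On the low band $|\xi|\le h^{-1/2}$, which sits inside the central cell where $\chi_h\le1$, I use $|e^{it\omega_h(\xi)}-e^{-it\xi^2}|\le|t|\,|\omega_h(\xi)+\xi^2|$ together with the Taylor bound $|\omega_h(\xi)+\xi^2|=\big|\xi^2-\f4{h^2}\sin^2(\f{h\xi}{2})\big|\lesssim h^2|\xi|^4$; since $h^2|\xi|^4\le h(1+\xi^2)$ on this band, it contributes $\lesssim h^{1/2}|t|\,\|f_h\|_{H^1_h}$, which is the desired shape. The hard part will be the high band $|\xi|>h^{-1/2}$, where $|e^{it\omega_h}-e^{-it\xi^2}|$ need not be small and no pointwise phase comparison survives: near the Brillouin-zone edge $|\xi|\sim h^{-1}$ the symbols $\omega_h(\xi)$ and $-\xi^2$ differ by $O(h^{-2})$, and moreover the periodic images created by $\chi_h$ have no continuum counterpart. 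On this band I would instead bound the propagated pieces by their high-frequency mass, using $\int_{|\xi|>h^{-1/2}}|\widehat{g}|^2\le h\,\|g\|_{\dot H^1}^2$ (for both the discrete and continuum data) together with the off-cell decay of $\chi_h$, so that the images are likewise controlled by $\|f_h\|_{H^1_h}$. Making this last contribution genuinely $O(h^{1/2})$ is the main obstacle, and it is exactly where the concentration of $\widehat{f}_h$ and $\widehat{f}$ at low frequencies, i.e. the full $H^1_h$- and $H^1$-bounds, must be exploited; this is the step I expect to require the most care.
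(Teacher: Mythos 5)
Your part (i) is correct and complete: splitting the error on each cell into the mean--value part (controlled by the Poincar\'e inequality, since $f_h(x_m)$ is exactly the average of $f$ on $[x_m,x_m+h]$) plus the affine increment (whose squared $L^2$ sum is $\tfrac{h^2}{3}\|D_h^+f_h\|_{L^2_h}^2\le \tfrac{h^2}{3}\|f'\|_{L^2}^2$ by Lemma~\ref{lem:boundedness1}(i)) is the standard argument. Note for the record that the paper does not prove Lemma~\ref{lem:linear} at all --- it imports it from \cite{HY2019, KLS2013} --- so the only thing to assess is whether your argument closes, and for part (ii) it does not quite.

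In part (ii) your reduction (insert $e^{it\partial_x^2}p_hf_h$ and peel off $\|p_hf_h-f\|_{L^2}$ by unitarity) and the low--band symbol comparison are fine. The gap is exactly where you flag it, but it is not a matter of ``care'': on the band $|\xi|>h^{-1/2}$ the factor $|t|$ cannot be recovered by any argument. After your reduction the remaining quantity is $\bigl\|\chi_h\,(e^{it\omega_h}-e^{-it\xi^2})\,\hat f_{h,\mathrm{per}}\bigr\|_{L^2(\R)}$, and if $\hat f_h$ is supported in $[\f{\pi}{h}-1,\f{\pi}{h}]$ and $t=h^2$, then there $\chi_h\approx 4/\pi^2$ and $t\,(\omega_h(\xi)+\xi^2)\approx \pi^2-4$, so this quantity is bounded below by $c\,\|f_h\|_{L^2_h}$, whereas $h^{1/2}|t|\,\|f_h\|_{H^1_h}\approx h^{3/2}\|f_h\|_{L^2_h}$; choosing $f=p_hf_h$ annihilates the term $\|p_hf_h-f\|_{L^2}$, so the inequality with the bare factor $|t|$ fails. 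What your high--band plan actually delivers --- and this is the correct resolution --- is the $|t|$--free bound: using $|e^{it\omega_h}-e^{-it\xi^2}|\le 2$ together with the Chebyshev estimate $\int_{|\xi|>h^{-1/2}}|\hat f_h|^2\,d\xi\le h\|f_h\|_{\dot H^1_h}^2$ (and, on the periodic images $I_k$, the vanishing $\chi_h(\xi)\lesssim \min\{k^{-2},\,h^2(\xi-\tfrac{2\pi k}{h})^2k^{-2}\}$) yields a contribution $\lesssim h^{1/2}\|f_h\|_{H^1_h}$. So your scheme proves the lemma with $|t|$ replaced by $1+|t|$. That weaker version is all that the paper uses: in \eqref{I1} the bound is immediately relaxed to $h^{1/2}(1+t)\|\vphi\|_{H^1}$, and in \eqref{I2} and the $I_{3,1}$, $I_{3,3}$ estimates the lemma is applied with $f=p_hf_h$ inside a $ds$--integral over a bounded interval, where an additive $h^{1/2}\|f_h\|_{H^1_h}$ is harmless. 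You should therefore state and prove the $(1+|t|)$ version rather than try to force the high band to carry a factor of $|t|$.
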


\begin{lemma} \label{lem:distributive} 
  If $p>1$, then
 \beq\label{eq:distributive}
\|p_h (|f_h|^{p-1}f_h)- |p_hf_h|^{p-1}p_hf_h \|_{L^2}\lesssim h \|f_h\|^{p-1}_{L^\infty_h}\|f_h\|_{H^1_h}\notag
\eeq
for all $f_h\in L^2_h(h\Z)$.
\end{lemma}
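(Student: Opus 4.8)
The plan is to work cell by cell, since both functions in the difference are explicit on each lattice interval $[x_m, x_m+h)$. Write $a = f_h(x_m)$, $b = f_h(x_m+h)$, and $\lambda = (x-x_m)/h \in [0,1)$, so that $(p_h f_h)(x) = (1-\lambda)a + \lambda b =: c$. By the definition of the interpolation operator, $p_h(|f_h|^{p-1}f_h)$ equals $(1-\lambda)|a|^{p-1}a + \lambda|b|^{p-1}b$ on this cell, whereas $|p_hf_h|^{p-1}p_hf_h = |c|^{p-1}c$. The crucial structural observation is that the two expressions agree at the endpoints $\lambda=0$ and $\lambda=1$ (both interpolation schemes reproduce the lattice values). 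Writing the cell difference as
\[
G(\lambda) = (1-\lambda)\bigl(|a|^{p-1}a - |c|^{p-1}c\bigr) + \lambda\bigl(|b|^{p-1}b - |c|^{p-1}c\bigr)
\]
makes this cancellation manifest and reduces the whole estimate to controlling $|a|^{p-1}a - |c|^{p-1}c$ and $|b|^{p-1}b - |c|^{p-1}c$.

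Next I would apply the elementary inequality \eqref{ineq:ele}. Since $a - c = \lambda(a-b)$ and $b - c = (1-\lambda)(b-a)$, and since $|c| \le \max(|a|,|b|)$ gives $|c|^{p-1} \lesssim |a|^{p-1}+|b|^{p-1}$ for $p>1$ (monotonicity of $t\mapsto t^{p-1}$), this yields
\[
|G(\lambda)| \lesssim \lambda(1-\lambda)\bigl(|a|^{p-1}+|b|^{p-1}\bigr)|a-b| \le \tfrac12\,\|f_h\|_{L^\infty_h}^{p-1}\,|a-b|.
\]
The key point is then the identity $|a-b| = |f_h(x_m+h)-f_h(x_m)| = h\,|(D_h^+f_h)(x_m)|$, which supplies a full power of $h$.

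Finally I would integrate and sum. On each cell, $\int_{x_m}^{x_m+h}|G|^2\,dx = h\int_0^1|G(\lambda)|^2\,d\lambda \lesssim h\cdot\|f_h\|_{L^\infty_h}^{2(p-1)}\,h^2|(D_h^+f_h)(x_m)|^2$, and summing over $m\in\Z$ collapses $\sum_m h\,|(D_h^+f_h)(x_m)|^2 = \|D_h^+f_h\|_{L^2_h}^2$, leaving one factor $h^2$ outside the sum. Taking square roots gives
\[
\|p_h(|f_h|^{p-1}f_h) - |p_hf_h|^{p-1}p_hf_h\|_{L^2} \lesssim h\,\|f_h\|_{L^\infty_h}^{p-1}\,\|D_h^+f_h\|_{L^2_h} \le h\,\|f_h\|_{L^\infty_h}^{p-1}\,\|f_h\|_{H^1_h},
\]
where the last inequality uses \eqref{eq:eq} and the equivalent norm \eqref{eq:equivalent norm}.

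I do not anticipate a serious obstacle: the entire estimate is driven by the endpoint cancellation encoded in $G(\lambda)$ and by the identity $|a-b| = h\,|(D_h^+f_h)(x_m)|$. The only points requiring minor care are the bound $|c|^{p-1}\lesssim |a|^{p-1}+|b|^{p-1}$, valid because $p-1>0$, and bookkeeping the two powers of $h$ (one from $dx = h\,d\lambda$ per cell, two from $|a-b|^2$) that combine with the summation to produce exactly $h^2$; the factor $\lambda(1-\lambda)$ arises naturally and is harmlessly bounded by $\tfrac14$, so it only sharpens the implicit constant and is not needed for the stated power of $h$.
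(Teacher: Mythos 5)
Your proof is correct: the cell-by-cell reduction via $G(\lambda)$, the endpoint cancellation, the bound $|c|^{p-1}\le\max(|a|,|b|)^{p-1}\le|a|^{p-1}+|b|^{p-1}$, the identity $|a-b|=h|(D_h^+f_h)(x_m)|$, and the final summation all check out, and the powers of $h$ balance exactly as you describe. The paper itself does not prove this lemma---it imports it from \cite{HY2019, KLS2013}---and your argument is essentially the standard one given there, so there is nothing further to reconcile.
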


Now recall that, {\it for each $h \in(0,1]$}, the global solution $u_h$ of \eqref{eq:dNLS} exists in $\calC(\R, L^2_h(h\Z))$, which is guaranteed by Theorem \ref{thm:globalwellposedness}. Moreover, it follows from \eqref{eq:relation L^2 H^1} that $\|u_h(t)\|_{H_h^1}$ is finite  {\it for each  $t\in\R$ and each $h \in(0,1]$}. Furthermore, it is bounded in $H^1_h$ uniformly in $t$ and $h$, which plays a crucial role in proving Theorem \ref{thm:continuum limit}.

First, we give the uniform $H^1_h$-bound in the case of $\dav\neq0$ which comes from the conservation laws.
\begin{proposition}
[Uniform $H^1_h$-bound for $\dav\neq 0$]\label{prop:uniform}
Let $\dav\neq 0$, $h\in (0,1]$ and $p$ satisfy \eqref{ass:p}. Given $\vphi\in H^1(\R)$,
let $u_h\in \calC(\R, L^2_h(h\Z))$ be the global solution of \eqref{eq:dNLS} whose initial datum $\vphi_h$  is the discretization of $\vphi$. Then there exists a positive constant $C$ depending only on $\dav, p$, and $\|\vphi\|_{H^1}$ such that
\beq \label{sup}
\sup_{h\in (0, 1]}\sup _{t\in \R} \|u_h(t)\|_{H^1_h}\le C.\notag
\eeq
\end{proposition}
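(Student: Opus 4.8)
The plan is to exploit the two conservation laws from Theorem~\ref{thm:globalwellposedness}. Since the mass is conserved, Lemma~\ref{lem:boundedness1} gives $\|u_h(t)\|_{L^2_h}=\|\vphi_h\|_{L^2_h}\le\|\vphi\|_{L^2}$ for all $t\in\R$ and $h\in(0,1]$, so by the norm equivalence \eqref{eq:eq} it suffices to bound $y(t):=\|D_h^+u_h(t)\|_{L^2_h}$ uniformly in $t$ and $h$. I would first record that the initial energy is uniformly bounded: applying the estimate \eqref{ineq:usingGN} to $\vphi_h$ and then Lemma~\ref{lem:boundedness1} to replace $\|\vphi_h\|_{L^2_h}$ and $\|D_h^+\vphi_h\|_{L^2_h}$ by $\|\vphi\|_{L^2}$ and $\|\vphi'\|_{L^2}$ yields $|E(\vphi_h)|\le C$ with $C$ depending only on $\dav,p,\|\vphi\|_{H^1}$ and, crucially, not on $h$.

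The case $\dav<0$ is immediate. By energy conservation, $\frac{|\dav|}{2}y(t)^2+\frac{1}{p+1}\int_0^1\|T_{h,r}u_h(t)\|_{L^{p+1}_h}^{p+1}\,dr=-E(\vphi_h)=|E(\vphi_h)|$, and since both terms on the left are nonnegative, $\frac{|\dav|}{2}y(t)^2\le|E(\vphi_h)|\le C$. This gives the bound for every $p>1$, with no further restriction.

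The substantive case is $\dav>0$, where conservation gives only $\frac{\dav}{2}y(t)^2=E(\vphi_h)+\frac{1}{p+1}\int_0^1\|T_{h,r}u_h(t)\|_{L^{p+1}_h}^{p+1}\,dr$, so the potential term must be dominated by a power of $y(t)$ strictly below $2$. The idea is that the averaging in $r$ allows the uniform Strichartz bound of Lemma~\ref{lem:8} in place of the naive Gagliardo--Nirenberg estimate \eqref{ineq:G-N} (which would force $p<5$). I would split the range of $p$. For $1<p\le 7$, I interpolate $\|T_{h,r}u_h\|_{L^{p+1}_h}$ between $L^2_h$ and $L^8_h$, handle the $L^2_h$ factor by mass conservation and the isometry \eqref{eq:isometry_H^1}, apply H\"older in $r$ on $[0,1]$ to the $L^8_h$ factor (whose exponent equals $\frac{4(p-1)}{3}\le 8$ precisely when $p\le 7$), and then invoke Lemma~\ref{lem:8}; this dominates the potential term by a uniformly bounded constant times $y(t)^{(p-1)/6}$. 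For $7\le p<9$, I interpolate instead between $L^8_h$ and $L^\infty_h$, bound the $L^\infty_h$ factor by \eqref{ineq:G-N-infty} and turn it, via \eqref{eq:isometry_H^1}, into the $r$-independent quantity $\|u_h\|_{L^2_h}^{1/2}y(t)^{1/2}$ that pulls out of the integral, and apply Lemma~\ref{lem:8} to the remaining $\int_0^1\|T_{h,r}u_h\|_{L^8_h}^8\,dr$; this dominates the potential term by a uniformly bounded constant times $y(t)^{(p-5)/2}$.

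The two exponents coincide at $p=7$ and both lie strictly below $2$ exactly when $p<9$, which is precisely the source of the restriction in \eqref{ass:p} for $\dav>0$; this is the heart of the matter. Combining with the uniform bound on $|E(\vphi_h)|$ gives $\frac{\dav}{2}y(t)^2\le C_0+C_1 y(t)^\sigma$ with $\sigma<2$ and $C_0,C_1$ independent of $t$ and $h$, whereupon Young's inequality absorbs $C_1y(t)^\sigma$ into $\frac{\dav}{4}y(t)^2$ plus a constant, yielding a uniform bound on $y(t)$ and hence on $\|u_h(t)\|_{H^1_h}$. I expect the main obstacle to be exactly this subquadratic control of the potential energy up to $p<9$: the Gagliardo--Nirenberg route stalls at $p=5$, and the extra room comes solely from the space--time $L^8$ gain of Lemma~\ref{lem:8}, i.e.\ from the dispersion-management averaging.
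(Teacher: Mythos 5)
Your proposal is correct and follows essentially the same route as the paper: mass conservation plus the uniform energy bound from \eqref{ineq:usingGN}, the immediate sign argument for $\dav<0$, and for $\dav>0$ the same split at $p=7$ with interpolation against the space--time $L^8_h$ bound of Lemma \ref{lem:8}, arriving at the same subquadratic exponents $(p-1)/6$ and $(p-5)/2$. The only cosmetic difference is that you invoke Young's inequality to absorb the subquadratic term where the paper simply notes $0<\kappa_2<2$ suffices.
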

\begin{proof}
First, recall that the $H_h^1$-norm is equivalent to the norm in \eqref{eq:equivalent norm}, i.e., for any $f \in L_h^2(h\Z)$
\[
\|f\|_{H_h^1} \sim \left(\|f\|^2_{L^2_h}+\|D^+_h f\|^2_{L^2_h}\right)^{1/2}.
\]
It is easy to see that for all $t\in\R$ and $h\in(0,1]$
\[
\|u_h(t)\|_{L_h^2} \le \|\vphi\|_{L^2}
\]
by the mass conservation law and Lemma \ref{lem:boundedness1} (i).
Next, note that the energy $E(\vphi_h)$ is bounded uniformly in $h$ since
\beq \label{energy:constant}
|E(\vphi_h)|
\lesssim \f{|\dav|}{2}  \| D^+_h \vphi_h\|_{L^2_h}^2 + \f{1}{(p+1)}\| \vphi_h\|_{L^2_h}^{\f{p+3}{2}}\|D^+ _h\vphi_h\|_{L^2_h}^{\f{p-1}{2}}
 \lesssim \|\vphi\|_{H^1}^2 +  \|\vphi\|_{H^1}^{p+1},
\eeq
where we used \eqref{ineq:usingGN} and  Lemma \ref{lem:boundedness1} (i).

When $\dav<0$, we use the energy conservation law to get
\[
\|D^+_h u_h(t)\|_{L^2_h}^2 \leq \f{2|E(\vphi_h)|}{|\dav|}
\]
for all $t\in \R$, which together with \eqref{energy:constant} completes the proof for this case.

Now we consider the case $\dav>0$.
 When $1<p<7$, we use the H\"older inequality 
  and the unitarity of $T_{h,r}$ on $L^2_h$ to see
\[
\begin{aligned}
\|T_{h,r}u_h(t)\|_{L^{p+1}_h}^{p+1}
& = \| |T_{h,r}u_h(t)|^{\f{7-p}{3}} |T_{h,r}u_h(t)|^{\f{4(p-1)}{3}}\|_{L^{1}_h}\\
&\leq  \|T_{h,r}u_h(t)\|_{L^{2}_h}^{\f{7-p}{3}} \|T_{h,r}u_h(t)\|_{L^{8}_h}^{\f{4(p-1)}{3}}\\
& = \|u_h(t)\|_{L^{2}_h}^{\f{7-p}{3}} \|T_{h,r}u_h(t)\|_{L^{8}_h}^{\f{4(p-1)}{3}}.
\end{aligned}
\]
Applying the H\"older inequality in $r$-integral with $6/(7-p)$ and $6/(p-1)$ and Lemma \ref{lem:8}, we have
\[
\int_0 ^1\|T_{h,r}u_h(t)\|_{L^{8}_h}^{\f{4(p-1)}{3}}dr \leq \left(\int_0 ^1\|T_{h,r}u_h(t)\|_{L^{8}_h}^{8}dr\right)^{\f{p-1}{6}}
 \lesssim \|u_h(t)\|_{L^{2}_h}^{\f {7(p-1)}{6}} \|D_h^+ u_h(t)\|_{L^{2}_h}^{\f{p-1}{6}}\,.
\]
Using the last two inequalities and mass conservation, we get
\beq\label{p:small}
\int_0 ^1\|T_{h,r}u_h(t)\|_{L^{p+1}_h}^{p+1} dr \lesssim  \|\vphi_h\|_{L^{2}_h}^{\f {5p+7}{6}}\|D_h^+ u_h(t)\|_{L^{2}_h}^{\f{p-1}{6}}.
\eeq
Next we consider the case $7\leq p<9$. Then
\[
\int_0 ^1\|T_{h,r}u_h(t)\|_{L^{p+1}_h}^{p+1} dr \leq   \sup_{0\leq r \leq 1} \|T_{h,r}u_h(t)  \|_{L^{\infty}_h}^{p-7}
\int_0 ^1\|T_{h,r}u_h(t)\|_{L^{8}_h}^{8}dr.
\]
We recall that the Gagliardo-Nirenberg inequality \eqref{ineq:G-N} gives
\[
\|f\|_{L^\infty_h}\lesssim \|f\|_{L^{2}_h}^{\f{1}{2}}\|D_h^+ f\|_{L^{2}_h}^{\f{1}{2}}
\]
for $f\in L^2_h(h\Z)$.
Thus, using this, Lemma \ref{lem:8} and mass conservation, we have
\beq\label{p:large}
\int_0 ^1\|T_{h,r}u_h(t)\|_{L^{p+1}_h}^{p+1} dr\lesssim   \|\vphi_h\|_{L^{2}_h}^{\f {p+7}{2}} \|D_h^+ u_h(t)\|_{L^{2}_h}^{\f{p-5}{2}}.
\eeq
Therefore, for any $1<p<9$, using the energy conservation laws, \eqref{p:small}, \eqref{p:large}, \eqref{energy:constant} and Lemma \ref{lem:boundedness1}(i), we have
\begin{equation}\label{global}
\begin{aligned}
\|D^+_h u_h(t)\|_{L^2_h}^2 &=\f{2E(\vphi_h)}{\dav}+ \f{2}{(p+1)\dav}\int_0 ^1\|T_{h,r}u_h(t)\|_{L^{p+1}_h}^{p+1} dr \\
&\lesssim |E(\vphi_h)| + \|\vphi_h\|_{L^{2}_h}^{\kappa_1}\|D_h^+ u_h(t)\|_{L^{2}_h}^{\kappa_2} \\
& \lesssim \left( \|\vphi\|_{H^1}^2 +\|\vphi\|_{H^1}^{p+1} \right) + \|\vphi\|_{L^2}^{\kappa_1}\|D^+ _h u_h(t)\|_{L^2_h}^{\kappa_2},
   \end{aligned}
\end{equation}
where we used $\kappa_1$ and $\kappa_2$ instead of the exponents in \eqref{p:small} and \eqref{p:large}. Noting $0<\kappa_2<2$,  we have a constant $C=C(\dav, p, \|\vphi\|_{H^1_h})$ such that
\[
\sup_{t\in\R}\|D^+_h u_h(t)\|_{L^2_h} \leq  C
\]
which completes the proof.
\end{proof}

\begin{remark}
When $\dav>0$, if we only consider the case $1<p<5$, the uniform $H^1_h$-bound on solutions immediately follows from the mass and energy conservation laws and the Gagliardo-Nirenberg inequality \eqref{ineq:G-N} as in \eqref{ineq:usingGN} to get
\[
\|D^+_h u_h(t)\|_{L^2_h}^2 \lesssim
 \left( \|\vphi\|_{H^1}^2 +\|\vphi\|_{H^1}^{p+1} \right) + \|\vphi_h\|_{L^2_h}^{\f{p+3}{2}}\|D^+ _h u_h(t)\|_{L^2_h}^{\f{p-1}{2}}
\]
which is \eqref{global} with $0<\kappa_2=\f{p-1}{2}<2$.
\end{remark}

In the case of $\dav=0$, to obtain the uniform $H^1_h$-bound,
we state a well-known generalization of Gronwall’s inequality in \cite{Bihari}, see also \cite{LaSalle}.
\begin{lemma}\label{lem:Gronwall}
Let $v:[a, b]\to \R_+$ be a continuous function that satisfies the inequality
\beq
v(t)\leq M +\int_a^t f(s)\omega(v(s))ds, \; t\in [a,b],\notag
\eeq
where $M\geq 0$, $f:[a,b]\to \R_+$ is continuous and $\omega:\R_+\to \R_+$ is a continuous and monotone increasing function with $\omega(s)>0$ for $s>0$. Then
\beq
v(t)\leq G^{-1}\left( G(M)+\int_a^t f(s)ds\right), \; t\in [a,b]\notag
\eeq
where $G:\R\to \R$ is given by
\[
G(x):= \int_{x_0}^x\f{1}{\omega(s)}ds, \; x\in \R.
\]
\end{lemma}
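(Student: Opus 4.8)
The plan is to reduce the integral inequality to a differential inequality for the majorant and then integrate after separating variables via the function $G$. First I would set
$$
V(t):= M +\int_a^t f(s)\,\omega(v(s))\,ds,\qquad t\in[a,b],
$$
so that by hypothesis $v(t)\le V(t)$ for every $t$, while $V(a)=M$. Since $f$ and $s\mapsto \omega(v(s))$ are continuous, $V$ is continuously differentiable with $V'(t)=f(t)\,\omega(v(t))$.

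The key step is a comparison: because $\omega$ is monotone increasing and $v(t)\le V(t)$, we have $\omega(v(t))\le\omega(V(t))$, and since $f\ge 0$ this yields
$$
V'(t)\le f(t)\,\omega(V(t)),\qquad t\in[a,b].
$$
Assume first $M>0$. Then $V(t)\ge M>0$, so $\omega(V(t))>0$ and we may divide. Recognizing that $G'(x)=1/\omega(x)$, the chain rule gives $\frac{d}{dt}G(V(t))=V'(t)/\omega(V(t))\le f(t)$. Integrating over $[a,t]$ and using $V(a)=M$ produces
$$
G(V(t))\le G(M)+\int_a^t f(s)\,ds.
$$
Since $1/\omega>0$ on $(0,\infty)$, $G$ is strictly increasing, hence so is $G^{-1}$; applying $G^{-1}$ and recalling $v(t)\le V(t)$ gives the assertion in this case.

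It remains to treat $M=0$, and for this I would run the above argument with $M$ replaced by an arbitrary $\veps>0$: since $v(t)\le 0+\int_a^t f\,\omega(v)\le \veps+\int_a^t f\,\omega(v)$, the case already proved yields $v(t)\le G^{-1}\big(G(\veps)+\int_a^t f\big)$. Letting $\veps\downarrow 0$ and using that $G^{-1}$ is continuous (being continuous and strictly monotone) then gives the claim with $G(0):=\lim_{\veps\to 0^+}G(\veps)$. The only delicate point is the division by $\omega(V(t))$, which is exactly where the hypothesis $\omega(s)>0$ for $s>0$ enters and why the borderline value $M=0$ must be reached by approximation; everything else is the fundamental theorem of calculus combined with the monotonicity of $\omega$ and of $G$. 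I therefore expect the handling of the degenerate endpoint $M=0$ (equivalently, the possibility that $G(s)\to-\infty$ as $s\to 0^+$) to be the main place where care is needed, the rest being routine.
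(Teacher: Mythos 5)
Your proof is correct and is the standard argument for the Bihari--LaSalle inequality: majorize $v$ by $V$, use monotonicity of $\omega$ to pass to the differential inequality $V'\le f\,\omega(V)$, separate variables via $G'=1/\omega$, and handle $M=0$ by $\veps$-approximation. The paper itself gives no proof of this lemma --- it is quoted as a known result with references to Bihari and LaSalle --- so there is nothing to compare against; your write-up supplies exactly the expected argument, including the correct identification of the only delicate points (positivity of $\omega(V)$ and the degenerate case $M=0$). The one thing neither you nor the paper's statement makes explicit is that the conclusion only makes sense as long as $G(M)+\int_a^t f(s)\,ds$ remains in the range of $G$, i.e.\ up to the possible blow-up time; this is precisely what limits the result to $T<T^*$ in the application with $\omega(s)=s^{(p+1)/2}$, but it is not a defect of your proof relative to the stated lemma.
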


\begin{proposition}
[Uniform $H^1_h$-bound for $\dav=0$]\label{prop:uniform2}
Let $\dav=0$, $h\in (0,1]$ and $1<p<5$. Given $\vphi\in H^1(\R)$,
let $u_h\in \calC(\R, L^2_h(h\Z))$ be the global solution of \eqref{eq:dNLS} whose initial datum $\vphi_h$  is the discretization of $\vphi$. Then, for any $0<T<\f{2}{p-1}(\|\vphi\|_{L^2} \|\vphi'\|_{L^2})^{-\f{p-1}{2}}$,
the inequality
\beq \label{est:dav=0}
\sup_{h\in (0, 1]}
\sup _{t\in [-T,T]}
\|D_h^+ u_h(t)\|_{L_h^2} \leq  \left(\|\vphi'\|_{L^2}^{-\f{p-1}{2}} -\f{p-1}{2} \|\vphi\|_{L^2}^{\f{p-1}{2}}T\right)^{-\f{2}{p-1}}\notag
\eeq
holds.
\end{proposition}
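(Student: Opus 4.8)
The plan is to exploit that, when $\dav=0$, the Duhamel formula \eqref{eq:duhamel formula} degenerates (since $e^{i\dav t\Delta_h}$ is the identity) to
\[
u_h(t)=\vphi_h+i\int_0^t\la Q_h\ra(u_h(s))\,ds,
\]
so that $\|D_h^+u_h(t)\|_{L^2_h}$ is governed by a closed nonlinear integral inequality which I then feed into the Bihari--Gronwall Lemma \ref{lem:Gronwall}. This is forced by the fact that, with $\dav=0$, the energy contains no dispersive term, so the argument of Proposition \ref{prop:uniform} (which reads the gradient bound off energy conservation) is unavailable and the growth of the gradient must be tracked directly; this is precisely why the result becomes local in time.

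First I would apply $D_h^+$ to the degenerate Duhamel formula, take the $L^2_h$-norm, and use the Minkowski inequality to obtain
\[
\|D_h^+u_h(t)\|_{L^2_h}\le\|D_h^+\vphi_h\|_{L^2_h}+\int_0^t\|D_h^+\la Q_h\ra(u_h(s))\|_{L^2_h}\,ds .
\]
The heart of the matter is the uniform-in-$h$ nonlinear estimate, in the form
\[
\|D_h^+\la Q_h\ra(f)\|_{L^2_h}\le\|f\|_{L^2_h}^{\f{p-1}2}\|D_h^+f\|_{L^2_h}^{\f{p+1}2},
\]
which I would derive exactly as in the proof of Lemma \ref{lemma:Hbound} (see \eqref{ineq:derivative}): commute $D_h^+$ with $T_{h,r}^{-1}$, use unitarity of $T_{h,r}$ on $L^2_h$, the pointwise bound stemming from \eqref{ineq:ele}, and crucially the Gagliardo--Nirenberg inequality \eqref{ineq:G-N-infty} in its constant-one form $\|T_{h,r}f\|_{L^\infty_h}\le\|f\|_{L^2_h}^{1/2}\|D_h^+f\|_{L^2_h}^{1/2}$. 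It is this step, and only this step, that must be carried out with optimal constants rather than with $\lesssim$, since the constant propagates to the exact coefficient $\f{p-1}2$ in the statement.

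Combining this with the mass conservation law $\|u_h(s)\|_{L^2_h}=\|\vphi_h\|_{L^2_h}$ from Theorem \ref{thm:globalwellposedness} and Lemma \ref{lem:boundedness1}(i) (which gives $\|\vphi_h\|_{L^2_h}\le\|\vphi\|_{L^2}$ and $\|D_h^+\vphi_h\|_{L^2_h}\le\|\vphi'\|_{L^2}$), I arrive at the closed inequality
\[
v(t)\le\|\vphi'\|_{L^2}+\|\vphi\|_{L^2}^{\f{p-1}2}\int_0^t v(s)^{\f{p+1}2}\,ds,\qquad v(t):=\|D_h^+u_h(t)\|_{L^2_h}.
\]
Then I would invoke Lemma \ref{lem:Gronwall} with $M=\|\vphi'\|_{L^2}$, $f\equiv\|\vphi\|_{L^2}^{(p-1)/2}$ and $\omega(s)=s^{(p+1)/2}$; since $p>1$ one computes $G(x)=-\f2{p-1}x^{-(p-1)/2}$, and a direct inversion of $G$ turns the Bihari conclusion into exactly the claimed bound, the base $\|\vphi'\|_{L^2}^{-(p-1)/2}-\f{p-1}2\|\vphi\|_{L^2}^{(p-1)/2}t$ being positive precisely for $t<T^*$. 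The case $t<0$ follows by the time-reversal symmetry of \eqref{eq:dNLS}, and since no constant depends on $h$ and the comparison function is monotone in $|t|$, taking the supremum over $h\in(0,1]$ and over $t\in[-T,T]$ completes the argument.

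The main obstacle is exactly the sharp nonlinear estimate: to recover the exact coefficient $\f{p-1}2$ (and hence the exact blow-up time $T^*$ of the comparison ODE) one cannot afford the crude Lipschitz constant of $z\mapsto|z|^{p-1}z$, so the bound on $\|D_h^+\la Q_h\ra(f)\|_{L^2_h}$ has to be organized so that only the constant-one inequality \eqref{ineq:G-N-infty} and the unitarity/commutation properties of $T_{h,r}$ are used; the remaining, more routine, difficulty is the bookkeeping showing that the continuous function $v$ indeed stays below the comparison solution on the whole of $[-T,T]$ and that the base remains positive there.
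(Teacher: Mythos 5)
Your proposal is correct and follows essentially the same route as the paper: the Duhamel formula with $\dav=0$, the pointwise estimate $\|D_h^+\la Q_h\ra(u_h)\|_{L^2_h}\le\|u_h\|_{L^2_h}^{(p-1)/2}\|D_h^+u_h\|_{L^2_h}^{(p+1)/2}$ via the constant-one Gagliardo--Nirenberg inequality \eqref{ineq:G-N-infty}, mass conservation together with Lemma \ref{lem:boundedness1}(i), and then the Bihari--Gronwall Lemma \ref{lem:Gronwall} with $\omega(s)=s^{(p+1)/2}$. Your remark about needing to avoid the Lipschitz constant of $z\mapsto|z|^{p-1}z$ is a fair point of care (the paper's own first inequality in this step silently absorbs that constant), but it does not change the structure of the argument.
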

\begin{proof}
We consider $t>0$ only.
From \eqref{eq:duhamel formula}, we have
\beq
\|D_h^+ u_h(t)\|_{L_h^2} \leq \|D_h^+ \vphi_h\|_{L_h^2}+ \int_0 ^t \|D_h^+ \la Q_h\ra (u_h(s))\|_{L_h^2}ds. \notag 
\eeq
Note that by the Gagliardo-Nirenberg inequality \eqref{ineq:G-N-infty}
\[
\begin{aligned}
\|D_h^+ \la Q_h\ra (u_h(t))\|_{L_h^2} & \leq \int_0 ^1\|T_{h,r} u_h(t)\|_{L_h^\infty}^{p-1} \|D_h^+ T_{h,r}u_h(t)\|_{L^2_h} dr\\
&\le \|u_h(t)\|_{L^2_h}^{\f{p-1}{2}} \|D_h^+ u_h(t)\|_{L^2_h}^{\f{p+1}{2}}.
\end{aligned}
\]
Using this, Lemma \ref{lem:boundedness1}, and mass conservation, we have
\beq
\begin{aligned}
\|D_h^+ u_h(t)\|_{L_h^2} \leq
 \|\vphi'\|_{L^2} + \|\vphi\|_{L^2}^{\f{p-1}{2}}\int_0 ^t \|D_h^+ u_h(s)\|_{L_h^2}^{\f{p+1}{2}}ds.\notag
\end{aligned}
\eeq
Then applying Lemma \ref{lem:Gronwall} completes the proof
\end{proof}

Now we are ready to prove Theorem \ref{thm:continuum limit} by directly comparing
\beq
p_h u_h(t)=p_h e^{i\dav t\Delta_h}\vphi_h-i\int_0 ^t p_h e^{i\dav (t-s)\Delta_h}
\la Q_h\ra(u_h(s))ds\notag
\eeq
and
\beq
u(t)=e^{i\dav t\partial_x^2}\vphi-i\int_0 ^t e^{i\dav(t-s)\partial_x^2}
\la Q\ra(u(s))ds,\notag
\eeq
where $\la Q_h \ra$ is given in \eqref{def:Q} and
\[
\la Q \ra ( f) := \int_0 ^1 T_{r}^{-1} (| T_{r}f|^{p-1}  T_{r}f)dr
\]
for $f\in H^1(\R)$.
Recall that $u_h$ and $u$ are the global solutions of \eqref{eq:dNLS} and \eqref{eq:NLS} with the initial data $\vphi_h$ and $\vphi$, respectively. Here, $u_h$ is not the discretization of $u$ while $\vphi_h$ is the discretization of $\vphi$.
\begin{proof}[Proof of Theorem \ref{thm:continuum limit}]
We prove the case when $\dav \neq 0$ since the case of $\dav=0$ can be proven analogously by Proposition \ref{prop:uniform2} instead of Proposition \ref{prop:uniform}.
Fix $0<T< \infty$, we consider positive times only and write the difference of $p_h u_h(t)$ and $u(t)$ as
\beq
p_h u_h(t)-u(t)=I_1(t) -i(I_2(t)+  I_3(t) +I_4(t)),\notag
\eeq
where
\[
\begin{aligned}
& I_1(t) :=p_h e^{it\dav\Delta_h}\vphi_h- e^{it\dav\partial_x^2}\vphi,\\
& I_2(t):=\int_0 ^t  \big(p_h e^{i(t-s)\dav\Delta_h}- e^{i(t-s)\dav\partial_x^2} p_h\big)\la Q_h\ra (u_h(s))ds,\\
& I_3(t):=\int_0 ^t   e^{i(t-s)\dav\partial_x^2} \bigg(p_h\la Q_h\ra (u_h(s)) - \la Q\ra(p_h u_h(s))\bigg) ds,\\
& I_4(t):=\int_0 ^t   e^{i(t-s)\dav\partial_x^2} \bigg( \la Q\ra(p_h u_h(s)) - \la Q\ra (u(s))\bigg) ds
\end{aligned}
\]
for $0<t<T$.
It follows from 
Lemmas \ref{lem:linear} (i) and \ref{lem:boundedness1} (i) that
\beq \label{I1}
\begin{aligned}
\|I_1(t)\|_{L^2}
\lesssim h^{\f 1 2}t(\|\vphi_h\|_{H_h^1}+ \|\vphi\|_{H^1})+ \|p_h \vphi_h-\vphi\|_{L^2}
\lesssim h^{\f 1 2}(1+t) \|\vphi\|_{H^1}.
\end{aligned}
\eeq
For $I_2(t)$,  we apply 
Lemmas \ref{lem:linear} (ii), \ref{lem:boundedness1} (ii), and \ref{lemma:Hbound} to obtain
\beq\label{I2}
\begin{aligned}
\|I_2(t)\|_{L^2} & \leq\int_0 ^t \left\| \big(p_h e^{i(t-s)\dav\Delta_h}- e^{i(t-s)\dav\partial_x^2} p_h\big)\la Q_h\ra (u_h(s))\right\|_{L^2}ds \\
& \lesssim \int_0 ^t h^{\f 1 2}|t-s|\left[\|\la Q_h\ra (u_h(s))\|_{H_h^1} + \|p_h \la Q_h\ra (u_h(s))\|_{H^1}\right]ds\\
& \lesssim  h^{\f 1 2}t  \int_0 ^t \| \la Q_h \ra (u_h(s))\|_{H^1_h}ds\\
& \lesssim h^{\f 1 2}t \int_0 ^t \|u_h(s)\|^p_{H^1_h}ds.
\end{aligned}
\eeq
For $I_4(t)$,
note from \eqref{ineq:ele}, the embedding $H^1(\R) \hookrightarrow L^\infty(\R)$, and the unitarity of $T_r$ that
 \[
\begin{aligned}
 &\int_0 ^1\|   |T_r (p_h u_h(s))|^{p-1}T_r (p_h u_h(s)) - |T_r u(s)|^{p-1} T_r u(s)\|_{L^2} dr\\
& \lesssim \int_0 ^1 \left(\|T_r (p_h u_h(s))\|_{L^\infty}^{p-1}+ \|T_r u(s)\|_{L^\infty}^{p-1} \right) \|p_h u_h(s)- u(s)\|_{L^2}dr \\
& \leq
\int_0 ^1 \left(\|T_r (p_h u_h(s))\|_{H^1}^{p-1}+ \|T_r u(s)\|_{H^1}^{p-1} \right) \|p_h u_h(s)- u(s)\|_{L^2}dr\\
& =  \left(\|p_h u_h(s)\|_{H^1}^{p-1}+ \| u(s)\|_{H^1}^{p-1} \right) \|p_h u_h(s)- u(s)\|_{L^2}.
\end{aligned}
\]
Thus, using the Minkowski's inequality and Lemma \ref{lem:boundedness1} (ii),
we have
\beq \label{I3}
\begin{aligned}
\|I_4(t)\|_{L^2} 
&\leq\int_0 ^t \int_0 ^1 \|   |T_r (p_h u_h(s))|^{p-1}T_r (p_h u_h(s)) - |T_r u(s)|^{p-1} T_r u(s)\|_{L^2} drds\\
& \lesssim \left(\|u_h\|_{L^\infty([0,T], H^1_h)}^{p-1}+ \|u\|_{L^\infty([0,T], H^1)}^{p-1}\right) \int_0 ^t
\|p_h u_h(s)- u(s)\|_{L^2}  ds.
\end{aligned}
\eeq
For $I_3(t)$, we decompose
$$
I_3(t)=\int_0 ^t   e^{i(t-s)\dav\partial_x^2} \bigg( I_{3,1}(s) + I_{3,2}(s) + I_{3,3}(s)\bigg)ds,
$$
 where
\beq
\begin{aligned}
I_{3,1}(s) &:=\int_0 ^1 p_h T_{h,r}^{-1}\bigg(|T_{h,r}u_h(s)|^{p-1}T_{h,r}u_h(s)\bigg) - T_r ^{-1}\bigg(p_h(|T_{h,r}u_h(s)|^{p-1}T_{h,r}u_h(s))\bigg) dr,\\ \notag
I_{3,2}(s) &:=\int_0 ^1 T_r ^{-1}\bigg(p_h (|T_{h,r}u_h(s)|^{p-1}T_{h,r}u_h(s))  - |p_h  T_{h,r} u_h(s)|^{p-1}p_h  T_{h,r}  u_h(s)\bigg) dr, \\\notag
I_{3,3}(s) &:=\int_0 ^1 T_r ^{-1}\bigg(|p_h  T_r u_h(s)|^{p-1}p_h  T_r  u_h(s) - |T_r (p_h u_h)(s)|^{p-1}T_r (p_h u_h)(s)\bigg) dr\notag
\end{aligned}
\eeq
for $0<s<t$.
We claim that
\[
\|I_{3,1}(s) + I_{3,2}(s) + I_{3,3}(s) \|_{L^2} \lesssim h^{\f 1 2} \|u_h(s)\|_{H^1_h}^p.
\]
Indeed, it follows from Lemma \ref{lem:distributive} and Sobolev inequality \eqref{ineq:Sobolev} that
\[
\begin{aligned}
\|I_{3,2}(s) \|_{L^2} &\leq \int_0 ^1\|p_h (|T_{h,r}u_h(s)|^{p-1}T_{h,r}u_h(s)) - |p_h  T_{h,r} u_h(s)|^{p-1}p_h  T_{h,r}  u_h(s)\|_{L^2} dr \\
&\lesssim  h \int_0 ^1\|T_{h,r}u_h(s)\|_{L_h ^\infty}^{p-1} \|T_{h,r}u_h(s)\|_{H^1_h}dr\\
& \lesssim  h \int_0 ^1\|T_{h,r}u_h(s)\|_{H^1_h}^{p} dr = h \|u_h(s)\|_{H^1_h}^p.
\end{aligned}
\]
For $I_{3,3}(s)$, we apply Lemma \ref{lem:linear} (ii) that
\[
\|p_hT_{h,r}u_h(s) - T_r (p_hu_h)(s)\|_{L^2} \lesssim h^{\f 1 2} |r| (\|u_h(s)\|_{H^1_h} +\|p_h u_h(s)\|_{H^1}) \lesssim h^{\f 1 2} |r| \|u_h(s)\|_{H^1_h}.
\]
Then, by the Sobolev inequality \eqref{ineq:Sobolev}, the embedding $H^1(\R) \hookrightarrow L^\infty(\R)$, and Lemma \ref{lem:linear} (ii), we have
\[
\begin{aligned}
\|I_{3,3}(s) \|_{L^2} &\leq \int_0 ^1\| |p_hT_{h,r}u_h(s)|^{p-1}p_hT_{h,r}u_h(s) - | T_r (p_hu_h)(s)|^{p-1}  T_r  (p_hu_h)(s)\|_{L^2} dr \\
&\lesssim \int_0 ^1\left(\|p_hT_{h,r}u_h(s)\|^{p-1}_{L^\infty} + \|T_r (p_hu_h)(s)\|^{p-1} _{L^\infty}  \right) \|p_hT_{h,r}u_h(s) - T_r (p_hu_h)(s)\|_{L^2}dr\\
& \lesssim h^{\f{1}{2}} \int_0 ^1 r \|u_h(s)\|^p_{H^1_h} dr
\leq h^{\f{1}{2}} \|u_h(s)\|^p_{H^1_h}.
\end{aligned}
\]
Since $|T_{h,r}u_h(s)|^{p-1}T_{h,r}u_h(s)\in H^1_h(\R)$ and $\||T_{h,r}u_h(s)|^{p-1}T_{h,r}u_h(s)\|_{H^1_h}\lesssim \|u_h(s)\|_{H^1_h}^p$,
by Lemmas \ref{lem:linear} (ii) and \ref{lem:boundedness1} (ii), we have
\[
\begin{aligned}
&\|I_{3,1}(s) \|_{L^2}\\
& \leq  \int_0 ^1
\left\|p_h T_{h,r}^{-1}\bigg(|T_{h,r}u_h(s)|^{p-1}T_{h,r}u_h(s)\bigg) - T_r ^{-1}\bigg(p_h\big(|T_{h,r}u_h(s)|^{p-1}T_{h,r}u_h(s)\big)\bigg)\right\|_{L^2} dr \\
& \lesssim h^{\f{1}{2}}\int_0 ^1  r \||T_{h,r}u_h(s)|^{p-1}T_{h,r}u_h(s)\|_{H_h ^1}dr \\
& \lesssim h^{\f{1}{2}} \|u_h(s)\|^p_{H^1_h}.
\end{aligned}
\]
Thus,
\beq
\|I_3(t)\|_{L^2_h}
\lesssim
h^{\f{1}{2}}  \int_0 ^t  \|u_h(s)\|_{H^1_h}^p ds \notag
\eeq
since $0< h\leq 1$.
Combining this, \eqref{I1}, \eqref{I2}, and \eqref{I3}, we have
\[
\begin{aligned}
\|p_h u_h(t)-u(t)\|&_{L^2}\lesssim   \; h^{\f 1 2}( 1+t) \bigg(\|\vphi\|_{H^1} + T\|u_h\|_{L^\infty([0,T],H_h^1)}^p\bigg) \\
&+ \left(\|u_h\|_{L^\infty([0,T], H^1_h)}^{p-1}+ \|u\|_{L^\infty([0,T], H^1)}^{p-1}\right)\int_0 ^t
\|p_h u_h(s)- u(s)\|_{L^2}  ds
\end{aligned}
\]
for all $0< t\leq T$ and $h\in (0,1]$.
Finally, by Gronwall’s inequality, we have
\[
\begin{aligned}
\|p_h u_h(t)-u(t)\|_{L^2} \lesssim & h^{\f 1 2}( 1+t) \bigg(\|\vphi\|_{H^1} + T\|u_h\|_{L^\infty([0,T],H_h^1)}^p\bigg)\\
&\times \exp\left\{ {t\left(\|u_h\|_{L^\infty([0,T], H^1_h)}^{p-1}+ \|u\|_{L^\infty([0,T], H^1)}^{p-1}\right)}\right\}
\end{aligned}
\]
for all $0< t\leq T$.
Thus, using Proposition \ref{prop:uniform} and the fact that $u\in \calC(\R, H^1(\R))$, we have two positive constants $A$ and $B$, independent of $h$ and  $T$, satisfying
\beq
\sup _{0< t\leq T}\|p_h u_h(t)-u(t)\|_{L^2} \le  A h^{\f 1 2}e^ {BT}.\notag
\eeq
\end{proof}

\vspace{5mm}

\appendix
\setcounter{section}{0}
\renewcommand{\thesection}{\Alph{section}}
\renewcommand{\theequation}{\thesection.\arabic{equation}}
\renewcommand{\thetheorem}{\thesection.\arabic{theorem}}

\noindent

\textbf{Acknowledgements: }
The authors are supported by the National Research Foundation of Korea(NRF) grants funded by the Korean government (MSIT) NRF-2020R1A2C1A01010735 and (MOE) NRF-2021R1I1A1A01045900.

%
%
%

 \bibliographystyle{abbrv}
 \bibliography{continuumlimit_bibfile}

\begin{comment}


\end{document}